\DeclareMathOperator{\supp}{supp}
\DeclareMathOperator{\diag}{diag}
\newcommand{\cG}{\mathcal{G}}
\newcommand{\cI}{\mathcal{I}}
\newcommand{\cB}{\mathcal{B}}
\newcommand{\cV}{\mathcal{V}}
\newcommand{\cE}{\mathcal{E}}
\newcommand{\cF}{\mathcal{F}}
\newcommand{\cX}{\mathcal{X}}
\newcommand{\cA}{\mathcal{A}}
\newcommand{\cS}{\mathcal{S}}
\newcommand{\cM}{\mathcal{M}}
\newcommand{\cL}{\mathcal{L}}
\newcommand{\cY}{\mathcal{Y}}
\newcommand{\real}{\mathbb{R}}
\newcommand{\complex}{\mathbb{C}}
\newcommand{\ifun}{\mathbb{1}} 
\newcommand{\eps}{\varepsilon}
\renewcommand{\Re}{\textrm{Re}}
\newtheorem{theorem}{Theorem}
\newtheorem{remark}{Remark}
\newtheorem{problem}{Problem}
\newtheorem{proposition}{Proposition}
\theoremstyle{definition}
\newtheorem{example}{Example}
\begin{document}
\title[Discrete inverse problems with internal functionals]{Discrete inverse problems with\\ internal functionals}

\author[M. Corbett]{Marcus Corbett}
\author[F. Guevara Vasquez]{Fernando Guevara Vasquez}
\email{fguevara@math.utah.edu}
\author[A. Royzman]{Alexander Royzman}
\author[G. Yang]{Guang Yang}
\address{Mathematics Department, University of Utah, Salt Lake City, UT 84112}

\thanks{This work was partially supported by the National Science Foundation grants DMS-2008610 and DMS-2136198.}

\subjclass[2020]{%
    94C15,  
    05C22, 
    35R30,  
    65N21.  
}

\keywords{Weighted graph Laplacian, Dissipated power, Hybrid inverse problems, Thermal noise.}

\begin{abstract}
    We study the problem of finding the resistors in a resistor network from measurements of the power dissipated by the resistors under different loads. We give sufficient conditions for local uniqueness, i.e. conditions that guarantee that the linearization of this non-linear inverse problem admits a unique solution. Our method is inspired by a method to study local uniqueness of inverse problems with internal functionals in the continuum, where the inverse problem is reformulated as a redundant system of differential equations. We use our method to derive local uniqueness conditions for other discrete inverse problems with internal functionals including a discrete analogue of the inverse Schr\"odinger problem and problems where the resistors are replaced by impedances and dissipated power at the zero and a positive frequency are available. Moreover, we show that the dissipated power measurements can be obtained from measurements of thermal noise induced currents. 
\end{abstract}

\maketitle


\section{Introduction}
\label{sec:intro}

Let us first consider the problem of finding the resistors in a resistor network from measurements of the power dissipated by each resistor under different loads that are obtained by e.g. imposing voltages at a few terminal nodes. To formulate the problem, let $\cG = (\cV,\cE)$ be an undirected graph, where $\cV$ is a (finite) set of vertices or nodes and $\cE \subset \cV \times \cV$ is the set of edges (we assume there are no loops). By $i \sim j$ we mean that nodes $i$ and $j$ are linked by an edge or in other words, $i \sim j \in \cE$. The resistors are represented by a conductivity function $\sigma : \cE \to (0,\infty)$ and the voltages by a function $u : \cV \to \real$. By Ohm's law, the currents flowing through each resistor under this voltage are $J = \sigma \odot \nabla u$, where $\odot$ is the componentwise or Hadamard product and $\nabla$ is the \emph{discrete gradient}, i.e. the linear operator with $(\nabla u)(i\sim j) = u(i) - u(j)$ for any edge $i\sim j$. To complete this definition we assume that the ordering of the nodes is fixed once and for all for each edge. The particular choice of ordering is irrelevant in the following discussion. We partition the nodes $\cV = \cB \cup \cI$ into ``boundary nodes'' $\cB$, which are the nodes where voltages can be imposed and ``interior nodes'' $\cI$, where the voltage is determined by current conservation or Kirchhoff's node law. Let $f : \cB \to \real$ be a voltage that is imposed on the boundary nodes. The voltage $u$ at all the nodes satisfies the Dirichlet problem
\begin{equation}
    \begin{aligned}
    [\nabla^T ( \sigma \odot \nabla u) ]_{\cI} &= 0,~\text{and}\\
    u_{\cB} &= f,
    \end{aligned}
    \label{eq:dir}
\end{equation}
where subscripting with a set means restriction to the set.  The first equation in \eqref{eq:dir} corresponds to enforcing Kirchhoff's node law at the interior nodes $\cI$, i.e.  that the net current flowing into an interior node is zero.
We may rewrite the first equation in \eqref{eq:dir} as $(L_\sigma u^{(j)} )_{\cI}= 0$, using the \emph{discrete Laplacian} $L_\sigma$ which is defined for any $u : \cV \to \real$ by $ L_\sigma u : \cV \to \real$ as follows (see e.g. \cite{Chung:1997:SGT,Curtis:1998:CPG})
\begin{equation}
    L_\sigma u = \nabla^T ( \sigma \odot \nabla u).
    \label{eq:dlap}
\end{equation}
We may represent $L_\sigma$ as a real $|\cV| \times |\cV|$ symmetric matrix. We denote the submatrices of $L_\sigma$ induced by the partition of the nodes into boundary and interior nodes by e.g. $(L_\sigma)_{\cB\cI}$ for the submatrix of $L_\sigma$ with rows corresponding to boundary nodes and columns corresponding to interior nodes. 

The second equation in \eqref{eq:dir} is analogous to a Dirichlet boundary condition, since the voltages have known values at the boundary nodes $\cB$. Under mild conditions on the graph $\cG$ and the conductivity $\sigma$, the Dirichlet problem \eqref{eq:dir} admits a unique solution for any boundary voltage $f : \cB \to \real$. For example,  one may assume that $\sigma > 0$ and that both $\cG$ and the subgraph induced by the interior nodes are connected, see e.g. \cite{Boyer:2016:SDC}.

 By Joule's law, the power dissipated by each resistor is given by 
 \begin{equation}
    P = J \odot \nabla u = \sigma \odot |\nabla u|^2, \label{eq:joule}
 \end{equation}
 where the square and the absolute value are understood componentwise. The first inverse problem we consider is the \emph{discrete inverse conductivity problem from power measurements}, that is finding the conductivity $\sigma$ given the dissipated power $\sigma \odot |\nabla u^{(j)}|^2$, for $j=1,\ldots,N$, where the voltages $u^{(j)}$ satisfy the Dirichlet problem \eqref{eq:dir} with known voltages at the boundary $f^{(j)} : \cB \to \real$, for $j=1,\ldots,N$.

Measurements of $\sigma \odot |\nabla u^{(j)}|^2$, the power dissipated by the resistors, could be obtained experimentally with e.g. a thermal camera, as a resistor's temperature is expected to increase with increasing dissipated power. A precise model relating resistor temperatures to power dissipated is outside of the scope of this work. The same problem arises if thermal noise induced currents are measured, as we show in \cref{sec:stochastic}. We now recast this inverse problem in a more general framework.

\subsection{Formulation as a redundant system of non-linear equations}
\label{sec:nonlin}
Kuchment and Steinhauer \cite{Kuchment:2012:SIP} and in a more general setting Bal \cite{Bal:2014:HIP} derived local uniqueness results for continuum inverse problems where the measurements consist of internal functionals. Roughly speaking, the inverse problem is recast as a non-linear redundant system of partial differential equations. The injectivity of the redundant system of linear partial differential equations that results from linearization is then determined using Douglis-Niremberg theory \cite{Douglis:1955:IEE}. Here we follow a similar path to prove local uniqueness results for problems defined on graphs. To be more precise, we consider discrete inverse problems that can be written as the non-linear system of equations:
\begin{equation}
\label{eq:nonlin}
 \cL(\gamma,u^{(j)}) = b^{(j)},~j=1,\ldots,N,\\
\end{equation}
where $u^{(j)}$ is a state variable, $\gamma$ is a constitutive parameter,  $b^{(j)}$ is a boundary condition or forcing term and $N$ is the number of different states that are considered. The possibly non-linear operator $\cL$ is such that the system \eqref{eq:nonlin} admits a unique solution $u^{(j)}$ if the $b^{(j)}$ are given. The inverse problem is to recover $\gamma$ from measurements of ``internal functionals''
\begin{equation}
    \cM(\gamma,u^{(j)}) = H^{(j)},~j=1,\ldots,N.
    \label{eq:abstract:ip}
\end{equation}
A key observation in \cite{Kuchment:2012:SIP,Bal:2014:HIP} is that the inverse problem of recovering $\gamma$ from \eqref{eq:abstract:ip} can be thought as the problem of finding $\gamma$ and the $u^{(j)}$ simultaneously, by solving the non-linear system of equations
\begin{equation}
\label{eq:nonlin}
\begin{aligned}
 \cL(\gamma,u^{(j)}) &= b^{(j)},\\
 \cM(\gamma,u^{(j)}) &= H^{(j)},~j=1,\ldots,N.
\end{aligned}
\end{equation}
Linearizing \eqref{eq:nonlin} about a reference $\gamma,u^{(j)}$ we obtain
\begin{equation}
    \label{eq:linearization}
\begin{aligned}
 \partial_\gamma \cL(\gamma,u^{(j)}) \delta\gamma + \partial_u   \cL(\gamma,u^{(j)}) \delta u^{(j)} &= 0,\\
 \partial_\gamma \cM(\gamma,u^{(j)}) \delta \gamma +\partial_u  \cM(\gamma,u^{(j)}) \delta u^{(j)} &= \delta H^{(j)},~j=1,\ldots,N.
 \end{aligned}
\end{equation}
Here $\delta \gamma$ (resp. $\delta u^{(j)}$) is a small perturbation about the reference $\gamma$ (resp. $u^{(j)}$). Also
 $\delta H^{(j)}$ is the difference in measurements between the perturbed and reference configurations, i.e. 
\begin{equation}
    \delta H^{(j)} =  \cM(\gamma+\delta\gamma,u^{(j)}+\delta u^{(j)}) -
 \cM(\gamma,u^{(j)}).
\end{equation}
If we let $v = (\delta\gamma, \{\delta u^{(j)}\})$ then the linear system \eqref{eq:linearization} can be rewritten as
\begin{equation}
    \cA v = \cS,
    \label{eq:linearization:compact}
\end{equation}
for an appropriate matrix $\cA$ and right-hand side $\cS$.

Our main goal here is to study whether the linearized inverse problem \eqref{eq:linearization:compact} admits a unique solution regardless of $\cS$, for several inverse problems with internal functionals defined on graphs. This is sometimes referred to as \emph{local uniqueness} and it is equivalent to proving that $\cA$ is injective or equivalently that $\cA$ has full column rank. If injectivity of $\cA$ holds for a neighborhood of $\gamma$, then the constant rank theorem (see e.g. \cite{Rudin:1976:PMA}) guarantees that the non-linear system admits a unique solution in a neighborhood of $\gamma$. To summarize, we linearize \eqref{eq:nonlin} to obtain \eqref{eq:linearization}. Then we find conditions under which $\delta H^{(j)} = 0$ implies $\delta\gamma=0$ and $\delta u^{(j)} = 0$, $j=1,\ldots,N$, ensuring that $\cA$ is injective.

To fix ideas, let us rewrite the discrete inverse conductivity problem from power measurements in the form \eqref{eq:nonlin}.

\begin{problem}
    \label{prob:dhic}
Find $\gamma = \sigma$ and $u^{(j)}$, $j=1,\ldots,N$ satisfying the non-linear system \eqref{eq:nonlin} where
\begin{equation}
    \cL(\sigma,u) =\begin{bmatrix}
    [\nabla^T ( \sigma \odot \nabla u) ]_{\cI} \\
u_{\cB} 
\end{bmatrix},
~
b^{(j)} = \begin{bmatrix} 0 \\ f^{(j)} \end{bmatrix}
~\text{and}~
\cM(\sigma,u) = \sigma \odot |\nabla u|^2.
\end{equation}
Here the $f^{(j)}$ are known Dirichlet boundary conditions.
\end{problem}

\subsection{Related work}
\label{sec:related}
\Cref{prob:dhic} is a discrete analogue of an inverse problem from internal functionals arising in acousto-electric tomography (also known as ultrasound modulated EIT \cite{Ammari:2008:EIT,Bal:2013:IDK}) and to image conductivities from thermal noise \cite{DeGiovanni:2024:ITN}.  For a review of other inverse problems with internal functionals in the continuum see \cite{Bal:2013:HIP}. We are aware of two other inverse problems on graphs from internal functionals. The first one \cite{Knox:2019:ENP} involves finding $\sigma$ from knowing $|\sigma \odot \nabla u^{(j)}|$. In the second one \cite{Ko:2017:RTI}, the problem is to find $\sigma$ from $\sigma\odot \nabla u^{(j)}$. Both problems fit the framework that we present here, with the caveat that the measurements in \cite{Knox:2019:ENP} have points of non-differentiability due to the absolute value.

\subsection{Contents}
\label{sec:contents}
We start in \cref{sec:stochastic} by showing that \cref{prob:dhic} also arises when measuring thermal noise induced currents in a resistor network. In \cref{sec:realcond} we prove local uniqueness results in the case where the conductivity is real, explaining the necessity of each assumption needed and parallels to the continuum problem \cite{Bal:2014:HIP}. Then we move to the case where the conductivity is complex and we measure the \emph{dissipated power} $\Re(\sigma) \odot |\nabla u^{(j)}|^2$, where $\Re(\sigma)$ is the real part of $\sigma$ (\cref{sec:cplxcond}). Similar uniqueness results can be obtained for a discrete Schr\"odinger problem (\cref{sec:schroe}). A Gauss-Newton approach to solve the non-linear problems is illustrated numerically in \cref{sec:gn}. We conclude with a summary and perspectives in \cref{sec:summary}.

\section{The discrete inverse conductivity problem from thermal noise}
\label{sec:stochastic}
One can arrive to \cref{prob:dhic} with a different experimental procedure where thermally induced random currents are measured at the boundary nodes while selectively heating some of the resistors to a known temperature, as was done in the continuum in \cite{DeGiovanni:2024:ITN}. The thermally induced random currents in a resistor have mean zero and variance that is proportional to the temperature and its conductivity and originate from 
random fluctuations of charge carriers in the resistor. This so-called Johnson-Nyquist noise \cite{Johnson:1928:TAE,Nyquist:1928:TAE} is usually undesirable when designing electrical circuits, but we show that it can be exploited to find the power dissipated inside the network, if we are allowed to control the temperature of the resistors. To  be more precise, let us assume that the temperature (in Kelvin) of the conductors in the network is given by a function $T: \cE \to (0,\infty)$. We model the thermally induced random currents by $J_{\text{noise}} : \cE \to \mathbb{R}$, where for all $e \in \cE$, $J_{\text{noise}}(e)$ is a random variable. The random currents have zero mean and are assumed to be independent of each other, i.e.
\begin{equation}
 \left\langle J_{\text{noise}}(e) \right\rangle = 0 ~\text{and}~
 \left\langle J_{\text{noise}}(e)J_{\text{noise}}(e') \right\rangle = \frac{\kappa}{\pi} \delta_{e,e'} T(e) \sigma(e),~ e,e' \in \cE.
 \label{eq:jdist}
\end{equation}
Here we used angular brackets to denote expectation, the Boltzmann constant $\kappa$ and the Dirac distribution $\delta_{e,e'} =1$ if $e=e'$ and zero otherwise.

Let us assume that the boundary nodes of the network are all connected to the ground, i.e. that $u_{\cB}=0$. If random currents are present, we need to account for them in the current conservation law (first equation of  \eqref{eq:dir}) by adding the net random currents at the interior nodes:
\begin{equation}
 [\nabla^T(\sigma \odot \nabla u)]_{\cI} = (\nabla^T J_{\text{noise}})_{\cI}.
 \label{eq:randomnet}
\end{equation}
The following result gives the mean and covariance of the currents measured at the boundary nodes.
\begin{proposition}
\label{prop:covar}
Let $g = (L_\sigma u)_{\cB}$ be the net currents at the boundary nodes for a voltage $u: \cV \to \mathbb{R}$ satisfying
\begin{equation}
\begin{aligned}
 [\nabla^T(\sigma \odot \nabla u)]_{\cI} &= (\nabla^T J_{\text{noise}})_{\cI},\\
 u_{\cB} &= 0.
\end{aligned}
\label{eq:jrhs}
\end{equation}
For random currents $J_{\text{noise}}$ satisfying \eqref{eq:jdist} and invertible $(L_\sigma)_{\cI\cI}$ we have
\[
\begin{aligned}
 \left\langle g \right\rangle &= 0, ~\text{and}~\\
 \left\langle gg^T \right\rangle  &= \frac{\kappa}{\pi} (L_\sigma)_{\cB\cI} (L_\sigma)_{\cI \cI}^{-1} R_{\cI} \nabla^T \diag( T \odot \sigma) \nabla R_{\cI}^T (L_\sigma)_{\cI \cI}^{-1} (L_\sigma)_{\cI\cB}.
\end{aligned}
\]
\end{proposition}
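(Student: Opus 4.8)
The plan is to solve the boundary value problem \eqref{eq:jrhs} explicitly for $u$ as a linear function of $J_{\text{noise}}$, substitute the result into $g = (L_\sigma u)_{\cB}$, and then take expectations using the moments \eqref{eq:jdist}.

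First I would exploit the block structure of $L_\sigma$ together with the grounding condition $u_{\cB} = 0$. Writing $u = R_{\cI}^T u_{\cI}$, i.e. the extension by zero of the interior values, the first equation of \eqref{eq:jrhs} becomes $(L_\sigma)_{\cI\cI} u_{\cI} = R_{\cI} \nabla^T J_{\text{noise}}$, so that, since $(L_\sigma)_{\cI\cI}$ is invertible by hypothesis,
\begin{equation*}
 u_{\cI} = (L_\sigma)_{\cI\cI}^{-1} R_{\cI} \nabla^T J_{\text{noise}}.
\end{equation*}
Again using the block structure and $u_{\cB} = 0$, the boundary currents are $g = (L_\sigma u)_{\cB} = (L_\sigma)_{\cB\cI} u_{\cI}$, whence $g = M J_{\text{noise}}$ with the deterministic matrix $M = (L_\sigma)_{\cB\cI} (L_\sigma)_{\cI\cI}^{-1} R_{\cI} \nabla^T$. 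At this point $g$ is simply a fixed linear image of the random edge-vector $J_{\text{noise}}$.

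The mean statement then follows at once from $\langle J_{\text{noise}} \rangle = 0$, giving $\langle g \rangle = M \langle J_{\text{noise}} \rangle = 0$. For the covariance I would write $\langle g g^T \rangle = M \langle J_{\text{noise}} J_{\text{noise}}^T \rangle M^T$, note that by \eqref{eq:jdist} the matrix $\langle J_{\text{noise}} J_{\text{noise}}^T \rangle$ has entries $\tfrac{\kappa}{\pi}\delta_{e,e'} T(e)\sigma(e)$ and is therefore exactly $\tfrac{\kappa}{\pi}\diag(T \odot \sigma)$, and finally simplify $M^T = \nabla R_{\cI}^T (L_\sigma)_{\cI\cI}^{-1} (L_\sigma)_{\cI\cB}$ using the symmetry of $L_\sigma$ (so that $(L_\sigma)_{\cB\cI}^T = (L_\sigma)_{\cI\cB}$ and $((L_\sigma)_{\cI\cI}^{-1})^T = (L_\sigma)_{\cI\cI}^{-1}$). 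Combining these yields precisely the stated expression for $\langle g g^T \rangle$.

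There is no real obstacle here: once \eqref{eq:jrhs} is inverted the argument is elementary linear algebra plus linearity of expectation. The only points that need a little care are the bookkeeping with the restriction and extension operators $R_{\cI}, R_{\cI}^T$ and the fact that $\nabla$ and $\nabla^T$ pass between edge- and vertex-indexed vectors, and making explicit that the independence-plus-variance assumption in \eqref{eq:jdist} is exactly what forces the middle factor $\langle J_{\text{noise}} J_{\text{noise}}^T \rangle$ to be the scalar multiple $\tfrac{\kappa}{\pi}\diag(T \odot \sigma)$ that appears in the formula.
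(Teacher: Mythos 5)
Your proposal is correct and follows essentially the same route as the paper's own proof: use the block structure and $u_{\cB}=0$ to solve for $u_{\cI} = (L_\sigma)_{\cI\cI}^{-1} R_{\cI}\nabla^T J_{\text{noise}}$, express $g$ as a fixed linear image of $J_{\text{noise}}$, and then compute the first two moments using \eqref{eq:jdist} and the symmetry of $L_\sigma$. No gaps.
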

\begin{proof}
From \eqref{eq:jrhs} we see that
$ (L_\sigma)_{\cI\cB} u_{\cB} + (L_\sigma)_{\cI\cI} u_{\cI} = R_{\cI}\nabla^T J_{\text{noise}}$,
where $R_{\cI}$ is operator restricting a vector the interior nodes. The voltage at the interior nodes is
$
 u_{\cI} = (L_\sigma)_{\cI \cI}^{-1} R_{\cI} \nabla^T J_{\text{noise}}.
$
The net currents at the boundary are
\[
 g = (L_\sigma u)_{\cB} = (L_\sigma)_{\cB\cB} u_{\cB} + (L_\sigma)_{\cB\cI} u_{\cI}
 =  (L_\sigma)_{\cB\cI} (L_\sigma)_{\cI \cI}^{-1} R_{\cI}\nabla^T J_{\text{noise}}.
\]
By linearity of the mean, we clearly have that \[ \langle g \rangle =  (L_\sigma)_{\cB\cI} (L_\sigma)_{\cI \cI}^{-1} R_{\cI} \nabla^T \langle J_{\text{noise}} \rangle = 0.\] The covariance of $g$ follows in a similar manner:
\[
\begin{aligned}
 \langle g g^T \rangle &= (L_\sigma)_{\cB\cI} (L_\sigma)_{\cI \cI}^{-1} R_{\cI} \nabla^T \langle J_{\text{noise}} J_{\text{noise}}^T \rangle \nabla R_{\cI}^T (L_\sigma)_{\cI \cI}^{-1} (L_\sigma)_{\cI\cB}\\
 &= (L_\sigma)_{\cB\cI} (L_\sigma)_{\cI \cI}^{-1} R_{\cI} \nabla^T \diag(\frac{\kappa}{\pi} T \odot \sigma) \nabla R_{\cI}^T (L_\sigma)_{\cI \cI}^{-1} (L_\sigma)_{\cI\cB}.
\end{aligned}
\]
Note that we have used the fact that $L_\sigma$ is symmetric in the previous expression.
\end{proof}
Let $T_0$ be a constant background temperature for the resistors. If we can selectively change the temperature of the specific resistor at edge $e$ by $\delta T$ degrees, the new temperature is $T_e : 
\cE \to \real$  is $T_e = T_0 + \delta T \delta_{e}$, where $\delta_e(e') = \delta_{e,e'}$, for $e' \in \cE$. Let $\langle g_{0} g_0^T \rangle$ be the covariance of the random currents flowing from the boundary nodes to the ground when the temperature of the resistors is $T_0$. Similarly we define the covariances $\langle g_{e} g_e^T \rangle$ for temperatures $T_e$, $e \in \cE$. The following result relates such \emph{differential measurements} (obtained  through $|\cE| + 1$ experiments in which the resistors have prescribed temperatures) to measurements of the power dissipated in the network under known voltages at the boundary. This result is a discrete analogous to \cite[Theorem 3.1]{DeGiovanni:2024:ITN}.
\begin{theorem}
 \label{thm:det}
 Let $u$ solve the Dirichlet problem \eqref{eq:dir}. Then differential measurements of the boundary current covariances and the power dissipated in the network are related as follows
 \begin{equation}
  \label{eq:det}
  [f^T (\langle g_e g_e^T \rangle - \langle g_0 g_0^T\rangle)f]_{e\in\cE} = \frac{\kappa}{\pi} \delta T \sigma \odot |\nabla R_{\cI}^T  R_{\cI} u|^2.
 \end{equation}
\end{theorem}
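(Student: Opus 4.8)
The plan is to evaluate the covariance formula of \cref{prop:covar} at each of the $|\cE|+1$ prescribed temperature profiles, exploit that the formula is linear in the temperature, and cancel the common contribution. Throughout, fix the boundary voltage $f$ and set
\[
w := \nabla R_{\cI}^T (L_\sigma)_{\cI\cI}^{-1} (L_\sigma)_{\cI\cB} f \in \real^{\cE},
\]
which is independent of the temperature profile. Using that $L_\sigma$ is symmetric, that $(L_\sigma)_{\cI\cI}^{-1}$ is symmetric, and that $R_\cI^T$ is the transpose of $R_\cI$, the quadratic form obtained from \cref{prop:covar} collapses to
\[
f^T \langle g g^T\rangle f = \frac{\kappa}{\pi}\, w^T \diag(T\odot\sigma)\, w = \frac{\kappa}{\pi}\sum_{e\in\cE} T(e)\,\sigma(e)\,w(e)^2
\]
for any temperature profile $T$. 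This is the one computation to carry out carefully; it is routine once one tracks the placement of $R_\cI$ and $R_\cI^T$.

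Next I would apply this identity with $T = T_0$ and with $T = T_e = T_0 + \delta T\,\delta_e$ and subtract. Since $T_e - T_0 = \delta T\,\delta_e$ is supported on the single edge $e$, all terms of the sum cancel except the one indexed by $e$, giving
\[
f^T\bigl(\langle g_e g_e^T\rangle - \langle g_0 g_0^T\rangle\bigr)f = \frac{\kappa}{\pi}\,\delta T\,\sigma(e)\,w(e)^2,\qquad e\in\cE.
\]
Collecting these scalars into a vector indexed by $\cE$ yields the right-hand side $\tfrac{\kappa}{\pi}\,\delta T\,\sigma\odot|w|^2$.

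It then remains to identify $|w|^2$ with $|\nabla R_{\cI}^T R_{\cI} u|^2$ componentwise, and this is where the Dirichlet problem \eqref{eq:dir} enters. Writing the first equation of \eqref{eq:dir} blockwise gives $(L_\sigma)_{\cI\cI} u_\cI + (L_\sigma)_{\cI\cB} f = 0$, hence $u_\cI = -(L_\sigma)_{\cI\cI}^{-1}(L_\sigma)_{\cI\cB} f$, and therefore $R_{\cI}^T R_{\cI} u = R_{\cI}^T u_\cI = -R_{\cI}^T (L_\sigma)_{\cI\cI}^{-1}(L_\sigma)_{\cI\cB} f$. Applying $\nabla$ gives $\nabla R_{\cI}^T R_{\cI} u = -w$, so that $|w|^2 = |\nabla R_{\cI}^T R_{\cI} u|^2$, the sign being irrelevant because the measurement depends only on the squared edge voltages. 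Combining this with the displayed identity completes the proof. The argument is essentially bookkeeping; the only genuine points requiring attention are the symmetry manipulations that reduce the covariance form to $w^T\diag(T\odot\sigma)w$ and the (harmless) sign produced by the Dirichlet solve.
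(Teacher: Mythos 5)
Your proposal is correct and follows essentially the same route as the paper's proof: both apply \cref{prop:covar}, use linearity of the covariance in the temperature profile to reduce the difference $\langle g_e g_e^T\rangle - \langle g_0 g_0^T\rangle$ to the single-edge contribution $\frac{\kappa}{\pi}\delta T\,\sigma(e)\,w(e)^2$ with $w = \nabla R_{\cI}^T (L_\sigma)_{\cI\cI}^{-1}(L_\sigma)_{\cI\cB}f$, and then identify $|w|^2$ with $|\nabla R_{\cI}^T R_{\cI} u|^2$ via the Dirichlet solve. Your write-up is if anything slightly more explicit than the paper's on the final identification (the harmless sign from $u_{\cI} = -(L_\sigma)_{\cI\cI}^{-1}(L_\sigma)_{\cI\cB}f$), which the paper compresses into ``noticing that the quantity is proportional to the $e$-th entry.''
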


\begin{proof}
Using \cref{prop:covar} we see that:
\begin{equation}
\label{eq:diff}
\begin{aligned}
    \langle g_e g_e^T \rangle - \langle g_0 g_0^T \rangle & =  \frac{\kappa}{\pi}  (L_\sigma)_{\cB\cI} (L_\sigma)_{\cI \cI}^{-1} R_{\cI} \nabla^T [\delta T \sigma(e) \delta_{e} \delta_{e}^T ]  \nabla R_{\cI}^T (L_\sigma)_{\cI \cI}^{-1} (L_\sigma)_{\cI\cB}\\
    &=  \frac{\kappa}{\pi} \delta T \sigma(e) 
    (\delta_e^T \nabla R_{\cI}^T (L_\sigma)_{\cI \cI}^{-1} (L_\sigma)_{\cI\cB})^T
    (\delta_e^T \nabla R_{\cI}^T (L_\sigma)_{\cI \cI}^{-1} (L_\sigma)_{\cI\cB}).
\end{aligned}
\end{equation}
Therefore, we see that 
\begin{equation}
 f^T(\langle g_e g_e^T \rangle - \langle g_0 g_0^T \rangle) f = \frac{\kappa}{\pi} \delta T \sigma(e) |\delta_e^T \nabla R_{\cI}^T (L_\sigma)_{\cI \cI}^{-1} (L_\sigma)_{\cI\cB} f|^2.
\end{equation}
The result follows from noticing that the quantity in the last equation is proportional to the $e-$th entry of $\sigma \odot | \nabla R_{\cI}^T  R_{\cI} u|^2$.
\end{proof}

Note that \cref{thm:det} only gives information about the power dissipated along resistors spanning two interior nodes. We illustrate \cref{thm:det} numerically in \cref{fig:det}, where we compare the deterministic power dissipated at the resistors between interior nodes to the one obtained from thermal noise induced currents.  Here we used $T_0 = 1$ K and $\delta T = 100$ K and $\sigma = 1$. We also used $\kappa = \pi$ to remove the scaling due to the Boltzmann constant. With $10^4$ realizations, the relative error on this small network was 2.3\%, counting all edges as in \eqref{eq:det} (see \verb|thermal_noise.ipynb| in the supplementary materials \cite{thecode}).

\begin{figure}
 \begin{center}
    \begin{tabular}{c@{}c@{}c}
    \includegraphics[height=4cm]{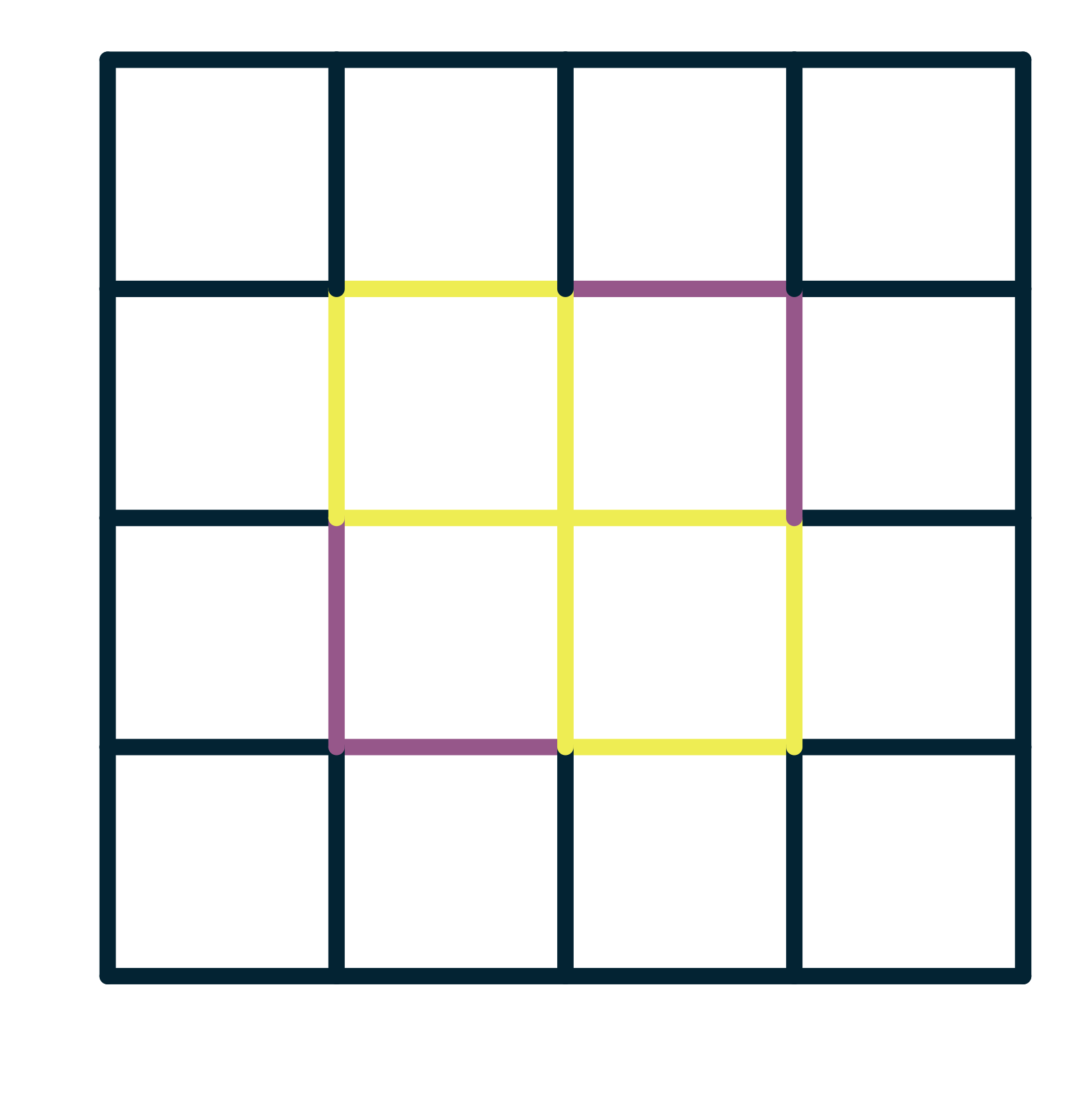} &
    \includegraphics[height=4cm]{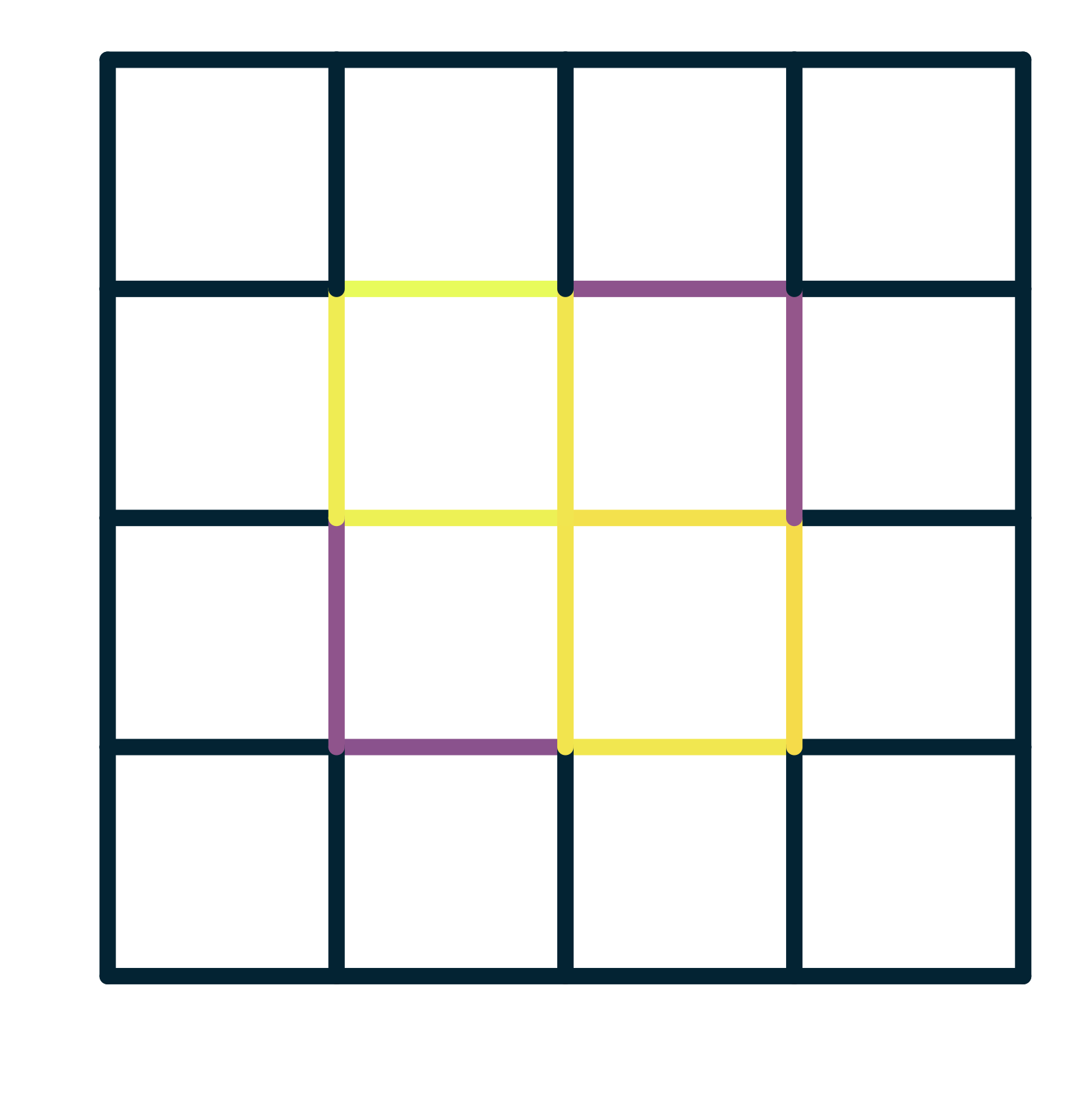} &
    \includegraphics[height=4cm]{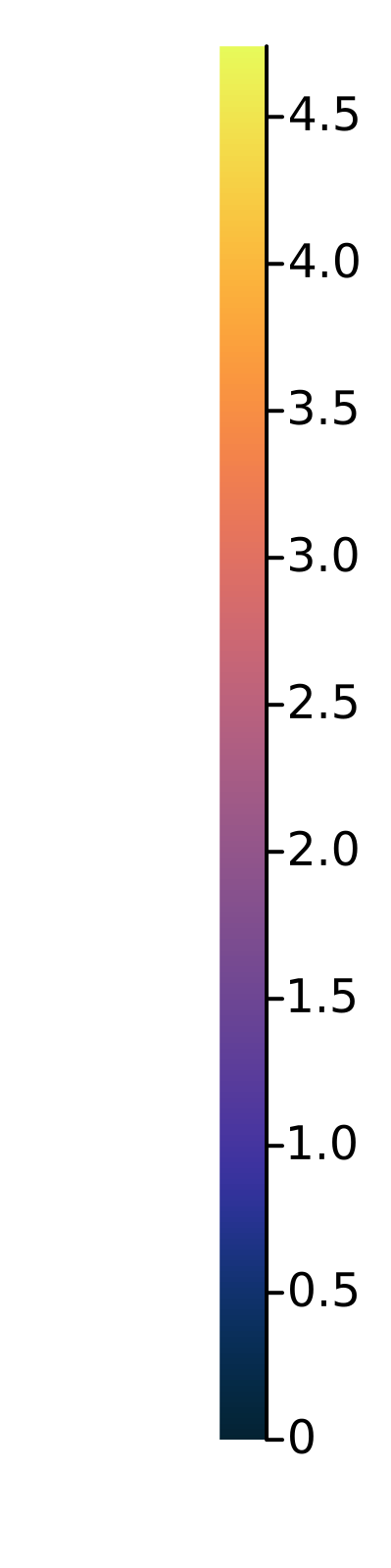}
    \end{tabular}
 \end{center}
 \caption{Comparison of power dissipated (left) to its estimation from thermal noise induced currents (right) using \cref{thm:det}. Here all the edges on edges of the outer square are boundary nodes and the conductivities are all equal to one. The boundary condition corresponds to setting the nodes on the top and right edges to a unit voltage  and the bottom and left edges to a zero voltage.}
 \label{fig:det}
\end{figure}

\section{The real conductivity problem}
\label{sec:realcond}
Recall \cref{prob:dhic} consists in finding a real conductivity $\sigma$ from measurements of dissipated power $\sigma \odot | \nabla u^{(j)}|^2$. To study the local uniqueness we write its linearization as follows 
\begin{equation}
    \begin{aligned}
     |\nabla u^{(j)}|^2 \odot \delta \sigma + 2 \sigma \odot (\nabla u^{(j)}) \odot (\nabla \delta u^{(j)}) &= \delta H^{(j)},\\
     (\delta u^{(j)})_{\cB} & =0,\\
     [ \nabla^T ( (\nabla u^{(j)}) \odot \delta \sigma)]_{\cI}  + [L_\sigma \delta u^{(j)}]_{\cI} &= 0,~\text{for}~ j=1,\ldots,N.
    \end{aligned}
    \label{eq:linsys2}
\end{equation}
The next result gives conditions under which we can guarantee that the linear system \eqref{eq:linsys2} is injective. In other words, if $\delta H^{(j)} = 0$ for $j=1,\ldots,N$, then $\delta \sigma = 0$ and $\delta u^{(j)} = 0$ for $j=1,\ldots,N$. Recall that the support of a function $f: \cX \to \real$ is $\supp f = \{ x \in \cX ~|~ f(x) \neq 0 \}$. Moreover, the characteristic or indicator function of some a subset $\cX$ of $\cY$ is $\ifun_\cX : \cY \to \{0,1\}$ with $\ifun_\cX(x)= 1$ if $x\in \cX$ and zero otherwise.
\begin{theorem}
\label{thm:realcond}
Define the sign functions $s^{(j)}: \cE \to \{-1,1\}$ by 
\begin{equation}
    s^{(j)} = 1-2\ifun_{\supp \nabla u^{(j)}}.
    \label{eq:sj}
\end{equation}
Assume that the following conditions hold:
\begin{enumerate}[i.]
 \item The supports of the $\nabla u^{(j)}$ cover $\cE$, i.e. $\cup_{j=1}^N \supp \nabla u^{(j)} = \cE$.
 \item The matrix $[L_\sigma]_{\cI\cI}$ is invertible.
 \item The matrices $[L_{s^{(j)} \odot \sigma}]_{\cI\cI}$ are invertible for $j=1,\ldots,N$.
\end{enumerate}
Then the linear system \eqref{eq:linsys2} admits a unique solution, or in other words the linearization of \cref{prob:dhic} is injective.
\end{theorem}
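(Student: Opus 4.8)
The plan is to run the ``redundant system'' argument of \cite{Bal:2014:HIP} in the discrete setting: assume $v=(\delta\sigma,\{\delta u^{(j)}\})$ lies in the kernel, i.e.\ it satisfies \eqref{eq:linsys2} with every $\delta H^{(j)}=0$, and deduce $v=0$. Condition (ii) plays the role of the standing well-posedness hypothesis ensuring the reference voltages $u^{(j)}$ solving \eqref{eq:dir} exist and are unique; the kernel argument itself will use only (i) and (iii).

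First I would exploit the edgewise structure of the first equation in \eqref{eq:linsys2}. Fix $j$ and an edge $e\in\supp\nabla u^{(j)}$, so $(\nabla u^{(j)})(e)\neq 0$; dividing the $e$-th component of $|\nabla u^{(j)}|^2\odot\delta\sigma+2\sigma\odot(\nabla u^{(j)})\odot(\nabla\delta u^{(j)})=0$ by $(\nabla u^{(j)})(e)$ gives $(\nabla u^{(j)})(e)\,\delta\sigma(e)=-2\sigma(e)\,(\nabla\delta u^{(j)})(e)$. On the complement of $\supp\nabla u^{(j)}$ we have $(\nabla u^{(j)})(e)=0$, hence $((\nabla u^{(j)})\odot\delta\sigma)(e)=0$ there as well. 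Combining the two cases and recalling $s^{(j)}=1-2\ifun_{\supp\nabla u^{(j)}}$ (so $s^{(j)}=-1$ on $\supp\nabla u^{(j)}$ and $s^{(j)}=+1$ off it), one obtains the identity, valid on all of $\cE$,
\begin{equation*}
 (\nabla u^{(j)})\odot\delta\sigma+\sigma\odot\nabla\delta u^{(j)}=s^{(j)}\odot\sigma\odot\nabla\delta u^{(j)}.
\end{equation*}
This is the crux of the proof and the only place the sign functions $s^{(j)}$ enter; it is the discrete counterpart of the continuum computation that recombines the internal-functional constraint with the state equation into an elliptic equation with a (possibly sign-indefinite) modified coefficient.

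Next I would substitute this identity into the third equation of \eqref{eq:linsys2}. Since $[\nabla^T((\nabla u^{(j)})\odot\delta\sigma)]_{\cI}+[L_\sigma\delta u^{(j)}]_{\cI}=[\nabla^T((\nabla u^{(j)})\odot\delta\sigma+\sigma\odot\nabla\delta u^{(j)})]_{\cI}$, the identity turns the third equation into $[\nabla^T(s^{(j)}\odot\sigma\odot\nabla\delta u^{(j)})]_{\cI}=[L_{s^{(j)}\odot\sigma}\,\delta u^{(j)}]_{\cI}=0$. Together with the homogeneous boundary condition $(\delta u^{(j)})_{\cB}=0$ from the second equation, its $\cI$-block reads $[L_{s^{(j)}\odot\sigma}]_{\cI\cI}(\delta u^{(j)})_{\cI}=0$ (the $\cB$-term drops out precisely because $(\delta u^{(j)})_{\cB}=0$, so no hypothesis on $[L_{s^{(j)}\odot\sigma}]_{\cI\cB}$ is needed), and invertibility of $[L_{s^{(j)}\odot\sigma}]_{\cI\cI}$, i.e.\ condition (iii), forces $(\delta u^{(j)})_{\cI}=0$; hence $\delta u^{(j)}=0$ for every $j$. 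Finally, feeding $\delta u^{(j)}=0$ back into the first equation of \eqref{eq:linsys2} yields $|\nabla u^{(j)}|^2\odot\delta\sigma=0$, so $\delta\sigma$ vanishes on $\supp\nabla u^{(j)}$ for each $j$; since these supports cover $\cE$ by condition (i), $\delta\sigma=0$. Thus $v=0$ and $\cA$ is injective.

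I expect the only genuine subtlety to be the bookkeeping around the division step: the relation $(\nabla u^{(j)})\odot\delta\sigma=-2\sigma\odot\nabla\delta u^{(j)}$ holds only on $\supp\nabla u^{(j)}$, so one must dispatch the off-support edges separately (trivially, since both sides vanish there) before asserting the global identity; once that identity is in hand, the remainder is elementary linear algebra on the submatrices induced by the partition $\cV=\cB\cup\cI$.
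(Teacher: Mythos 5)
Your proposal is correct and follows essentially the same route as the paper's proof: both substitute the (pseudoinverted) power-measurement equation into the conservation law to replace $\sigma$ by $s^{(j)}\odot\sigma$, invoke assumption (iii) to kill $\delta u^{(j)}_{\cI}$, and then use assumption (i) to kill $\delta\sigma$. The only difference is cosmetic — you carry out the division edge by edge while the paper packages it with the componentwise pseudoinverse $(\nabla u^{(j)})^\dagger$.
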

Let us introduce some notation before proceeding with the proof. 
The linear operator restricting a function $u : \cV \to \real$ to the interior nodes $\cI$ by $R_{\cI}$ (i.e.  $R_{\cI}u = u_{\cI}$). Its adjoint $R_{\cI}^T$ takes a function defined on $\cI$ and extends it by zeroes to all of $\cV$. Finally, if $\cX$ is a finite set and $f : \cX \to \real$, we define its pseudoinverse $f^\dagger$ so that $f \odot f^\dagger = f^\dagger \odot f = \ifun_{\supp f}$. Componentwise, the pseudoinverse of $f$ is given for $x \in \cX$ by
\begin{equation}
    f^\dagger(x) = \begin{cases} f(x)^{-1}, & \text{if $f(x)\neq 0$,}\\
    0 & \text{otherwise}.
    \end{cases}
    \label{eq:dagger}
\end{equation}
\begin{proof}
To prove injectivity, we need to show that if we take \eqref{eq:linsys2} and set $\delta H^{(j)} = 0$, $j=1,\ldots,N$ then we can conclude that $\delta\sigma =0$ and $\delta u^{(j)} = 0$, $j=1,\ldots,N$. Assumption (ii) is needed to ensure that the Dirichlet problem \eqref{eq:dir} admits a unique solution.

From the second equation of \eqref{eq:linsys2}  we see that $\delta u^{(j)}_{\cB}=0,~j=1,\ldots,N$.
Thus, for a fixed $j \in \{0,\ldots,N\}$, we can see from  the first equation of \eqref{eq:linsys2} that
\begin{equation}
 \delta\sigma \odot \ifun_{\supp \nabla u^{(j)}} =-2(\nabla u^{(j)})^{\dagger}\odot \sigma\odot(\nabla R_{\cI}^{T}\delta u_{\cI}^{(j)}).
 \label{eq:deltasig2}
\end{equation}
Now notice that in the third equation of \eqref{eq:linsys2}, $\delta\sigma$ only appears multiplied by $\nabla u^{(j)}$. Therefore, we can replace $\delta\sigma$ by $\delta\sigma \odot \ifun_{\supp \nabla u^{(j)}}$ (for that particular $j$). Hence, using \eqref{eq:deltasig2} and the third equation of \eqref{eq:linsys2} we get
\begin{equation}
 -2R_{\cI}\nabla^{T}[
 \nabla u^{(j)}\odot 
 (\nabla u^{(j)})^{\dagger} \odot 
 \sigma \odot 
 (\nabla R_{\cI}^{T}\delta u_{\cI}^{(j)})
 ]
 +
 R_{\cI}\nabla^{T}[
 \sigma \odot 
 (\nabla R_{\cI}^{T}\delta u_{\cI}^{(j)})
 ]
 =0.
\end{equation}
Clearly this leads to
\begin{equation}
 R_{\cI}\nabla^{T}[
 (1-2\nabla u^{(j)}\odot 
 (\nabla u^{(j)})^{\dagger} )\odot 
 \sigma \odot 
 (\nabla R_{\cI}^{T}\delta u_{\cI}^{(j)})
 ]
 =0.
 \label{eq:thirdrow2}
\end{equation}
By using $s^{(j)} = 1-2\nabla u^{(j)}\odot 
 (\nabla u^{(j)})^{\dagger}$ in \eqref{eq:thirdrow2} we obtain
\begin{equation}
 [L_{s^{(j)} \odot \sigma}]_{\cI\cI} \delta u_{\cI}^{(j)} = 0.
 \label{eq:thirdrow3}
\end{equation}
By assumption (iii), we see that $\delta u_{\cI}^{(j)} = 0$ and that the entries in $\delta \sigma$ associated with edges in $\supp \nabla u^{(j)}$ are zero. By repeating the same procedure for all experiments $j = 1,\ldots,N$, we can conclude  that $ \delta u^{(j)} = 0$. Finally, \eqref{eq:deltasig2} and assumption (i) imply that $\delta \sigma = 0$, which is the desired result.
\end{proof}

\subsection{Parallels with the continuum}
Since $\sigma >0$, it is possible to show that $L_\sigma$ is symmetric positive semidefinite. On the other hand, the matrices $L_{s^{(j)} \odot \sigma}$ are indefinite. Indeed, such matrices correspond to networks where the conductivity is allowed to change signs. There is a striking similarity between our approach and the continuum approach in \cite{Bal:2014:HIP}. Indeed, that $L_\sigma$ defines a positive quadratic form can be seen as a discrete version of the classic notion of ellipticity of second order partial differential operators (see e.g. \cite{Evans:1998:PDE}). In Fourier space, this corresponds to having a positive (or negative) definite quadratic in the spatial frequency parameter. Assumption (iii) in \cref{thm:realcond} requires that an indefinite quadratic form be non-singular and is analogous to the notion of ellipticity in the sense of Douglas-Nirenberg that is used in \cite{Bal:2014:HIP}. In Fourier space, this requires that the principal symbol of a differential operator can vanish only when the spatial frequency component is zero.

\Cref{prob:dhic} can be thought of as a discrete analogue of Acousto-Electric Tomography. It is known (see e.g. \cite{Capdeboscq:2009:IMN}) that in two dimensions at least two different boundary conditions are required to obtain unique reconstruction. On the other hand \cref{thm:realcond} shows that if the gradient of the reference potential does not vanish, then \emph{one single boundary condition} could be sufficient to determine $\delta \sigma$. This may seem contradictory at first, as it would indicate that in a 2D finite difference discretization of the AET problem we can recover $\sigma$ with one single boundary condition. In such a discretization we would need to define difference operators $D_1$ and $D_2$ for the horizontal and vertical edges, so that the data available is $\sigma \odot (| D_1 u|^2 + |D_2 u|^2)$, where $u$ is a suitable discretization of the potential. Since this is less data than the data available for \cref{prob:dhic}, we expect that it may take more measurements, see also the numerical scheme in \cite{DeGiovanni:2024:ITN}.

\subsection{Examples}
\label{sec:examples}
We illustrate the necessity of the assumptions in \cref{thm:realcond} with examples. We start with a graph where one single Dirichlet boundary condition ($N=1$) is needed to guarantee local injectivity.

\begin{figure}
\begin{center}
\includegraphics[width=0.9\textwidth]{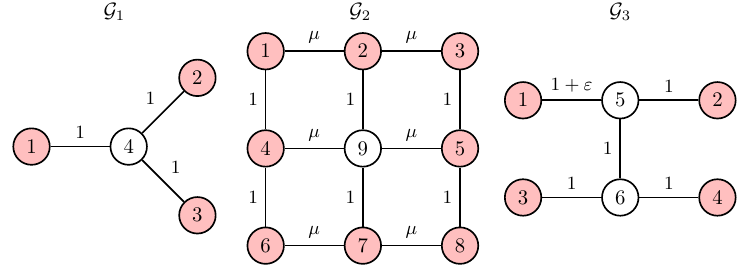}
\end{center}
\caption{Examples of weighted graphs illustrating different cases of \cref{thm:realcond}, where the conductivities are indicated on each edge. For each graph $\cV \subset \mathbb{N}$, and the nodes are ordered so that the boundary nodes appear first (in red) and the interior nodes last (in white). To ensure positive edge weights or conductivities it is assumed that $\mu > 0$ and $|\eps|<1$. For more details see \cref{sec:examples}.}
\label{fig:examples}
\end{figure}

\begin{example} \label{ex:g1}
Consider the network $\cG_1$ in \cref{fig:examples}. If we impose the boundary voltage $f(1)=1$, $f(2)=f(3)=0$, then $u(4) = 1/3$ and thus $\supp \nabla u = \cE$. Therefore, \cref{thm:realcond} applies, and we can reconstruct the conductivity locally from a single boundary condition (i.e. with $N=1$), see \texttt{example2.ipynb} in supplementary materials \cite{thecode}.
\end{example}

The next example shows a network where two Dirichlet boundary conditions are needed to obtain local uniqueness and where local uniqueness does not hold in a degenerate case.
\begin{example} \label{ex:g2}
Consider the network $\cG_2$ in \cref{fig:examples} and the boundary conditions $f^{(1)}$ and $f^{(2)}$ given as follows.
\begin{center}
\begin{tabular}{c|cccccccc}
 $i$   & $1$ & $2$ & $3$ & $4$ & $5$ & $6$ & $7$ & $8$\\\hline
 $f^{(1)}(i)$ & 1 & 1 & 1 & 1/2 & 1/2 & 0 & 0 & 0\\
 $f^{(2)}(i)$ & 1 & 1/2 & 0 & 1 & 0 & 1 & 1/2 & 0
\end{tabular}
\end{center}

Since there is a single interior node (number $9$), a simple calculation reveals that $u^{(1)}(9) = u^{(2)}(9) = 1/2$. Hence, currents flow only along the vertical (resp. horizontal) edges for $f^{(1)}$ (resp. $f^{(2)}$). In other words, $\supp \nabla u^{(1)}$ (resp. $\supp \nabla u^{(2)}$) consists of the vertical edges (resp. horizontal edges). Thus, $\supp \nabla u^{(1)} \cup \supp \nabla u^{(2)} = \cE$, which ensures that \cref{thm:realcond} assumption (i) is satisfied with $N=2$ boundary conditions. 
Moreover, we can calculate $[L_\sigma]_{\cI\cI} = 2 (1+\mu)$, which ensures \cref{thm:realcond} assumption (ii) holds for any $\mu >0$. The sign functions from \eqref{eq:sj} are given by $s^{(1)} = - s^{(2)}$ with $s^{(1)}$ negative on the vertical edges and positive on the horizontal edges. Therefore, we see  that $[L_{s^{(1)}\odot \sigma}]_{\cI\cI} = -[L_{s^{(2)}\odot \sigma}]_{\cI\cI} = 2 (-1+\mu)$.  We get local uniqueness as long as $\mu \neq 1$, which ensures that \cref{thm:realcond} assumption (iii) holds. When $\mu = 1$, assumption (iii) is not satisfied and thus local uniqueness does not hold. Indeed, consider conductivity perturbations of the form $\delta \sigma = c_1 v_1 + c_2 v_2$ where $c_1, c_2$ are scalars and the only non-zero entries of $v_1$ and $v_2$ are  $v_1(2 \sim 9) = - v_1(9 \sim 7) = 1$, $v_2(4 \sim 9) = - v_2(9 \sim 5) = 1$. Any conductivity perturbation of this form leads to zero change in the linearized data (see \texttt{example3.ipynb} in supplementary materials \cite{thecode}). We recall however that lack of injectivity of the linearization of a mapping does not necessarily mean the mapping is not invertible.
\end{example}

Next we illustrate the possibility of having an ill-conditioned inverse problem by constructing a network and boundary condition where one edge has an arbitrarily small current flowing through it, and therefore the power measurements are not a reliable way of measuring its conductivity.
\begin{example}
We consider here the network $\mathcal{G}_3$ in \cref{fig:examples} where local uniqueness holds, but the resulting linear system can be made arbitrarily ill-conditioned. For this example we use $N=1$, $f^{(1)}(1) = f^{(1)}(2) =1$ and $f^{(1)}(3) = f^{(1)}(4) = 0$. Lengthy but straightforward calculations give that the solution $u$ to the corresponding Dirichlet problem must satisfy
\[
u(5) = \frac{4+3\eps}{8 + 3\eps}
~\text{and}~
u(6) = \frac{4+2\eps}{8+3\eps}.
\]
Therefore, $|(\nabla u)(5,6)| = |\eps / (8+3\eps)| = \mathcal{O}(\varepsilon)$ as $\varepsilon \to 0$.  Since we divide by the non-zero components of $\nabla u$ in \eqref{eq:deltasig2}, we expect that the conditioning of the problem is inversely proportional to $\varepsilon$, as can be confirmed numerically (see \texttt{example3.ipynb} in supplementary materials \cite{thecode}).
\end{example}

\section{The complex conductivity problem}
\label{sec:cplxcond}

We assume complex conductivities of the form $\sigma = \sigma' + \jmath\omega \sigma''$, where $\sigma' \in (0,\infty)^{\cE}$, $\sigma'' \in \real^{\cE}$, $\omega \in \real$ is a known frequency and $\jmath=\sqrt{-1}$.  Such complex $\sigma$ are approximations to the admittance (the reciprocal of the impedance) of a general passive circuit element \cite{Bott:1949:ISW}. The Dirichlet problem for this class of complex conductivities is as follows
\begin{equation}
 \begin{aligned}
    \left[ \nabla^T(\sigma(\omega) \odot \nabla u(\omega)) \right]_{\cI} &= 0,\\
    [u(\omega)]_{\cB} &= f(\omega),
 \end{aligned}
 \label{eq:dir:sig:cplx}
\end{equation}
where $f(\omega) \in \complex^{\cB}$ is the known data at the boundary. This problem can be shown to admit a unique solution in particular  when $\sigma' > 0$ and both $\cG$ and the subgraph induced by the interior nodes are connected, see e.g. \cite[Theorem 4.1]{Boyer:2016:SDC}. We assume the \emph{dissipated power} (or real power) $\sigma' \odot |\nabla u(\omega)|^2$ is measured at each electric element.

\subsection{Inverse problem from dissipated power measurements}
In this case a single frequency measurement is not enough to determine the complex conductivity $\sigma(\omega)$. Indeed, the number of unknowns is $2|\cE|$, but the number of equations is $|\cE|$. Thus, we consider the problem of finding the conductivity from measurements at \emph{two frequencies} $\omega_0$ and $\omega_1$. In order to capitalize on the results for the real case (\cref{thm:realcond}), we assume that $\omega_0 = 0$ and $f_0 \equiv f(\omega_0) \in \real^{\cB}$. We consider both the conductivity and the voltages as unknowns. Since $u_0 \equiv u(\omega_0)$ is real but $u_1 \equiv u(\omega_1)$ is complex, we consider $u_1$ and $\overline{u_1}$ as independent variables. Putting everything together we can formulate the inverse problem of finding the complex conductivity from a static (zero frequency measurement) and a non-zero frequency measurement as follows.
\begin{problem}
\label{prob:cplxcond} 
Find $\gamma = (\sigma',\sigma'')$ and $u^{(j)}=(u_0^{(j)},u_1^{(j)},\overline{u_1}^{(j)})$,  for $j=1,\ldots,N$  in the non-linear system \eqref{eq:nonlin} where
\begin{equation}
    \cM(\gamma,u) = \begin{bmatrix}
    \sigma' \odot |\nabla u_0|^2\\
    \sigma' \odot |\nabla u_1|^2
\end{bmatrix},
~
    \cL(\gamma,u) = \begin{bmatrix}
        \left[\nabla^T(\sigma' \odot \nabla u_0)\right]_{\cI} \\
        \left[\nabla^T((\sigma' +\jmath\omega_1\sigma'') \odot \nabla u_1)\right]_{\cI} \\
        \left[\nabla^T((\sigma' -\jmath\omega_1\sigma'') \odot \nabla \overline{u_1})\right]_{\cI} \\
        [u_0]_{\cB}\\
        [u_1]_{\cB}\\
        [\overline{u_1}]_{\cB}\\
    \end{bmatrix},
\end{equation}
and 
\begin{equation}
    b^{(j)} = \begin{bmatrix}
        0;
        0;
        0;
        f_0^{(j)};
        f_1^{(j)};
        \overline{f_1}^{(j)}
    \end{bmatrix}, 
\end{equation}
where a semicolon denotes vector concatenation.
Here the $f_0^{(j)}$ (resp. $f_1^{(j)}$) are known real (resp. complex) Dirichlet boundary conditions and $\omega_1$ is known.
\end{problem}
The number of unknowns in \cref{prob:cplxcond} is $2|\cE|+3N|\cV|$ and the number of equations is $2N|\cE| + 3 N |\cV|$. Notice that for $N=1$, the resulting system of non-linear equations is formally determined (i.e. same number of equations as unknowns), however as we see next, the resulting Jacobian is unlikely to be invertible. We give a sufficient condition guaranteeing that the Jacobian of the non-linear system is injective.
\begin{theorem} \label{thm:cplxcond}
 Assume that the graph $\cG$, $\sigma'$ and the boundary conditions $f^{(j)}$ are such that \cref{thm:realcond} is satisfied. Define for $j=1,\ldots,N$ the $|\cE| \times |\cE|$ complex matrices
\begin{equation}
    \label{eq:aj}
    A^{(j)} = -\jmath\diag(\overline{\nabla u_1^{(j)}} ) \nabla R_{\cI}^T {[L_{\sigma'+\jmath\omega_1\sigma''}]}_{\cI\cI}^{-1} R_{\cI}\nabla^T\diag(\nabla u_1^{(j)}).
\end{equation}
 If $\Re([A^{(1)}; \ldots; A^{(N)}])$ is full rank\footnote{Here the imaginary part of a matrix is given by the matrix of imaginary parts of its elements. The semicolon denotes vertical concatenation of matrices.}, then the linearization of the two frequency inverse complex conductivity  \cref{prob:cplxcond} is injective.
\end{theorem}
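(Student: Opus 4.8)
The plan is to linearize the non-linear system in \cref{prob:cplxcond} about the reference $(\gamma,u^{(j)})$, set $\delta H^{(j)}=0$ for $j=1,\dots,N$, and show that the resulting homogeneous linear system forces $\delta\sigma'=0$, $\delta\sigma''=0$ and $\delta u_0^{(j)}=\delta u_1^{(j)}=\delta\overline{u_1}^{(j)}=0$. Writing out the linearization, the three Dirichlet-type rows of $\cL$ give $\delta u_0^{(j)}|_{\cB}=\delta u_1^{(j)}|_{\cB}=\delta\overline{u_1}^{(j)}|_{\cB}=0$. The crucial first observation is that the first (zero-frequency) row of $\cL$ together with the first row of $\cM$ is, verbatim, the linearized system \eqref{eq:linsys2} of \cref{prob:dhic} with $(\sigma,u^{(j)},\delta\sigma,\delta u^{(j)})$ replaced by $(\sigma',u_0^{(j)},\delta\sigma',\delta u_0^{(j)})$. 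Since by hypothesis $\cG$, $\sigma'$ and the $f^{(j)}$ satisfy the assumptions of \cref{thm:realcond}, that theorem immediately yields $\delta\sigma'=0$ and $\delta u_0^{(j)}=0$ for all $j$.

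With $\delta\sigma'=0$ in hand, I would next exploit the two $\omega_1$-frequency rows of $\cL$. These reduce to $[L_{\sigma'+\jmath\omega_1\sigma''}\delta u_1^{(j)}]_{\cI}=-\jmath\omega_1 R_{\cI}\nabla^T\diag(\nabla u_1^{(j)})\,\delta\sigma''$ and its complex conjugate, using that the $\overline{u_1}$-rows of $\cL$ are exactly the conjugates of the $u_1$-rows and that $\delta\sigma''$ is real. Using $\delta u_1^{(j)}|_{\cB}=0$ and invertibility of $[L_{\sigma'+\jmath\omega_1\sigma''}]_{\cI\cI}$ (the well-posedness hypothesis for \eqref{eq:dir:sig:cplx}, already implicit in the definition \eqref{eq:aj}), I solve $\delta u_{1,\cI}^{(j)}=-\jmath\omega_1[L_{\sigma'+\jmath\omega_1\sigma''}]_{\cI\cI}^{-1}R_{\cI}\nabla^T\diag(\nabla u_1^{(j)})\,\delta\sigma''$, and $\delta\overline{u_1}_{\,\cI}^{(j)}$ is its complex conjugate. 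Substituting these into the second row of $\cM$, which after cancelling the positive factor $\sigma'$ reads $\nabla\delta u_1^{(j)}\odot\overline{\nabla u_1^{(j)}}+\nabla u_1^{(j)}\odot\nabla\delta\overline{u_1}^{(j)}=0$, and recognizing the matrices of \eqref{eq:aj}, one obtains $\omega_1\bigl(A^{(j)}+\overline{A^{(j)}}\bigr)\delta\sigma''=0$, i.e. $2\omega_1\,\Re(A^{(j)})\,\delta\sigma''=0$. Stacking over $j=1,\dots,N$ and using that $\Re([A^{(1)};\dots;A^{(N)}])$ has full (column) rank together with $\omega_1\neq0$ gives $\delta\sigma''=0$; back-substitution then gives $\delta u_{1,\cI}^{(j)}=0$, hence $\delta u_1^{(j)}=\delta\overline{u_1}^{(j)}=0$. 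Combined with $\delta\sigma'=0$ and $\delta u_0^{(j)}=0$, this proves injectivity.

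The hard part will be the bookkeeping forced by treating $u_1$ and $\overline{u_1}$ as independent unknowns: one must carefully verify that the $\overline{u_1}$-rows of both $\cL$ and $\cM$ really are the complex conjugates of the $u_1$-rows, so that $\delta\overline{u_1}^{(j)}$ remains the conjugate of $\delta u_1^{(j)}$, and then track how the two terms in the $\cM$-row recombine into $A^{(j)}+\overline{A^{(j)}}=2\Re A^{(j)}$ — this is precisely where the real-part condition in the statement originates, and it also explains why a purely static measurement ($\omega_1=0$, making $A^{(j)}$ purely imaginary) carries no information about $\sigma''$. A secondary technical point is confirming invertibility of $[L_{\sigma'+\jmath\omega_1\sigma''}]_{\cI\cI}$ before solving for $\delta u_{1,\cI}^{(j)}$. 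Beyond these, the computation runs parallel to the proof of \cref{thm:realcond}.
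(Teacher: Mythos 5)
Your proposal is correct and follows essentially the same route as the paper's own proof: apply \cref{thm:realcond} to the zero-frequency block to get $\delta\sigma'=0$ and $\delta u_0^{(j)}=0$, solve the interior $\omega_1$-equations for $\delta u_1^{(j)}$ and its conjugate in terms of $\delta\sigma''$, substitute into the linearized power measurement to obtain $2\Re(A^{(j)})\delta\sigma''=0$, and conclude from the full-rank hypothesis. Your explicit tracking of the factors $\sigma'>0$ and $\omega_1\neq 0$ (and the remark that $\omega_1=0$ would destroy the information about $\sigma''$) is slightly more careful than the paper's ``after a few manipulations.''
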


\begin{proof}
  Linearizing the equations in the system that are associated with $u_0^{(j)}$ and equating to zero we get
 \begin{equation}
    \begin{aligned}
    \delta\sigma' \odot |\nabla u_0^{(j)}|^2 + 2\sigma' \odot \nabla \delta u_0^{(j)} \odot \nabla u_0^{(j)} &= 0,\\
     [\delta u_0^{(j)}]_{\cB} &=0,\\
     [L_{\delta\sigma'} u_0^{(j)} + L_{\sigma'} \delta u_0^{(j)} ]_{\cI} &= 0.
    \end{aligned}
    \label{eq:complexsystem}
\end{equation}
 By \cref{thm:realcond} we conclude that $\delta\sigma' = 0$ and $\delta u_0^{(j)} = 0$, $j=1,\ldots,N$.
Setting the linearized equations at the interior nodes involving $u_1^{(j)}$ we get:
\begin{equation}
 [ L_{\delta\sigma'} u_1^{(j)}]_{\cI} + \jmath\omega_1 [L_{\delta\sigma''} u_1^{(j)}]_{\cI}
 + [L_{\sigma'+\jmath \omega \sigma''}]_{\cI} = 0.
\end{equation}
Now, we  can use that $\delta\sigma' = 0$ and $[\delta u_1^{(j)}]_{\cB} =  0$ to solve for $[\delta u_1^{(j)}]_{\cI}$ in terms of $\delta\sigma''$. The process can be repeated for the linearized equations at the interior nodes involving $\overline{u_1}^{(j)}$, which reveals that
\begin{equation}
\begin{aligned}
    \delta u_1^{(j)} &= -\jmath\omega_1  R_{\cI}^T {[L_{\sigma'+\jmath\omega_1\sigma''}]}_{\cI\cI}^{-1} R_{\cI}\nabla^T\diag(\nabla u_1^{(j)})\delta\sigma'',~\text{and}\\
    \overline{\delta u_1}^{(j)} &= \jmath\omega_1  R_{\cI}^T {[L_{\sigma'-\jmath\omega_1\sigma''}]}_{\cI\cI}^{-1} R_{\cI}\nabla^T\diag(\overline{\nabla u_1}^{(j)})\delta\sigma''.\\
\end{aligned}
\label{eq:deltau1jcond}
\end{equation}
Linearizing the equations involving the dissipated power measurements for $\omega_1$ and equating to zero we obtain:
\begin{equation}
  \delta \sigma' \odot |\nabla u_1^{(j)}|^2 + \sigma' \odot \nabla \overline{u_1}^{(j)} \odot \nabla \delta u_1^{(j)} + \sigma' \odot \nabla u_1^{(j)} \odot \nabla \overline{\delta u_1}^{(j)} = 0.
\end{equation}
Using that $\delta\sigma'=0$ and the expressions for $\delta u_1^{(j)}$ and $\overline{\delta u_1}^{(j)}$ in \eqref{eq:deltau1jcond} we obtain after a few manipulations that
\begin{equation}
\begin{aligned}
      -&\jmath\diag(\overline{\nabla u_1}^{(j)} ) \nabla R_{\cI}^T {[L_{\sigma'+\jmath\omega_1\sigma''}]}_{\cI\cI}^{-1} R_{\cI}\nabla^T\diag(\nabla u_1^{(j)}) 
     \delta \sigma '' \\
     + &\jmath\diag(\nabla u_1^{(j)} ) \nabla R_{\cI}^T {[L_{\sigma'-\jmath\omega_1\sigma''}]}_{\cI\cI}^{-1} R_{\cI}\nabla^T\diag(\overline{\nabla u_1}^{(j)})
     \delta \sigma''= 0,
\end{aligned}
\end{equation}
or in other words $ A^{(j)} \delta\sigma'' + \overline{A}^{(j)} \delta\sigma'' = 2\Re(A^{(j)}) \delta\sigma'' = 0$. By assumption the matrix $\Re[A^{(1)};\ldots;A^{(N)}]$ is full rank. We thus reach the conclusion $\delta\sigma'' =0$. 
\end{proof}

\begin{remark}
 Notice that the complex matrices $A^{(j)}$, $j=1,\ldots,N$ are singular. Indeed, the current conservation law (first equation of \eqref{eq:dir:sig:cplx}) implies that 
\begin{equation}
    \label{eq:ajnull}
    A^{(j)}(\sigma' + \jmath \omega_1 \sigma'') = 0.
\end{equation}
This does not rule out the assumption in \cref{thm:cplxcond}, since the rank of a complex matrix may be different from that of its real or imaginary parts. For example, it is possible to have a singular complex matrix with a non-singular real part or vice-versa, a non-singular complex matrix with singular real part.
The dimensions of the matrices used to define $A^{(j)}$ do reveal that $\Re A^{(j)}$ has rank at most $\max(|\cE|,|\cI|)$. Thus, at least for planar graphs with two boundary nodes, we cannot get injectivity of the linearized problem with $N=1$. Indeed, if $\cG$ is planar then Euler's theorem holds:
\begin{equation}
 2 = |\cV| - |\cE| + |\cF| = |\cI| - |\cE| + |\cB| + |\cF|,
\end{equation}
where $|\cF|$ is the number of faces and $|\cF|\geq 1$. Hence, $|\cI| < |\cE|$ and thus $\Re(A^{(j)})$ in its own is singular.
\end{remark}

\begin{example}
    \label{ex:cplxcond}
     Consider the graph $\cG_3$ in \cref{fig:examples} with $\omega_1 = 1$ and real and imaginary conductivities given as in the following table.
    \begin{center}
    \begin{tabular}{l|c|c|c|c|c}
     $i\sim j$   & $1\sim 5$ & $2\sim 5$ & $3\sim 6$ & $4\sim 6$ & $5\sim 6$\\\hline
     $\sigma'(i\sim j)$ & 2 & 1 & 1 & 3 & 1\\
     $\sigma''(i\sim j)$ & 2 & 1 & 1 & 2 & 3
    \end{tabular}
    \end{center}
    We computed the Jacobian $\cA$ of \cref{prob:cplxcond} when the Dirichlet boundary conditions are $\delta_1,\ldots,\delta_N$, for $N=1,\ldots,4$.  We report in the table below the dimensions of the Jacobian, its rank and condition number. We also include the rank of the complex matrix $A \equiv [A^{(1)};\ldots;A^{(N)}]$ and its real part as they are needed for the injectivity condition of \cref{thm:cplxcond}. Recall that the condition number of rectangular matrices is defined by the ratio of the largest singular value to the smallest one, and it is a numerically stable of determining whether a matrix is full rank, see e.g. \cite{Golub:2013:MC}. 
 
    \begin{center}
    \begin{tabular}{c|c|c|c|c|c}
    $N$ & $\cA$ dimensions & $\text{rank}\,\cA$	& $\text{cond}\,\cA$	& $\text{rank}\,A$ & $\text{rank}\,\Re A$\\\hline
    1	& (28, 28)	& 27 &	1.54951e17	& 2	& 4\\
    2	& (56, 46)	& 45 &	1.36094e16	& 3	& 4\\
    3	& (84, 64)	& 64 &	3319.42	    & 4	& 5\\
    4	& (112, 82)	& 82 &	1837.58	    & 4	& 5\\
    \end{tabular}
    \end{center}
    To determine whether the Jacobian $\cA$ is full rank we need to check whether its rank is equal to its smaller dimension (the number of columns), which only happens for $N=3$ and $N=4$. This is consistent with the rank of $\Re A$ being equal to the number of edges ($|\cE| = 5$ in this case). Notice that in these experiments $\text{rank}\, A < \text{rank}\, \Re A$, which could be attributed to the complex conductivity being always in the nullspace of $A$, see \eqref{eq:ajnull}. Finally, we observe that the condition number of $\cA$ improves with $N$. This table can be reproduced with \verb|example4.ipynb| in the supplementary materials \cite{thecode}.

\end{example}
\section{The inverse Schr\"odinger problem}
\label{sec:schroe}
Consider a graph $\cG = (\cV,\cE)$, where the vertices are partitioned into boundary nodes $\cB$ and interior nodes $\cI$. The (discrete) \emph{Schr\"odinger problem} with conductivity $\sigma \in \complex^{\cE}$,  \emph{Schr\"odinger potential} $q \in \complex^{\cI}$ and Dirichlet boundary condition $f \in \complex^{\cB}$ consists in finding $u \in \complex^{\cV}$ satisfying the Dirichlet problem 
\begin{equation}
    \label{eq:qdir}
    \begin{aligned}
       (L_{\sigma}u)_{\cI} +  q \odot u_{\cI}  &= 0,\\
        u_{\cB}  &= f.
    \end{aligned}
\end{equation}
Clearly if $(L_{\sigma})_{\cI\cI}+ \diag(q)$ is invertible, then the Schr\"odinger problem admits a unique solution $u$ for any boundary condition $f$ which is given by
\begin{equation}
    u_{\cB} = f ~\text{and} ~ u_{\cI} = -[(L_{\sigma})_{\cI\cI}+\diag(q)]^{-1}(L_{\sigma})_{\cI\cB}f.
\end{equation}

To motivate this problem consider the case where $\sigma \in (0,\infty)^{\cE}$ and $q \in (0,\infty)^{\cI}$. Here solving \eqref{eq:qdir} is equivalent to finding the voltages in a resistor network with conductors on each edge of $\cG$ with conductivities given by $\sigma$ and conductors connecting each interior node to the ground (zero potential), with conductivities given by $q$. In this case $q\odot u_{\cI}$ corresponds to currents leaking to the ground. We emphasize that the problem \eqref{eq:qdir} is more general, since the Schr\"odinger potential could have negative or complex valued entries. 

We consider an inverse problem where the goal is to recover the Schr\"odinger potential $q$ from measurements of the dissipated power $\Re(q) \odot |u_{\cI}|^2$. This is a slight abuse of nomenclature, since this quantity does not physically correspond to power if $\Re(q(x)) <0$ for some interior node $x$.  In \cref{sec:realq} we study the case where $q$ is real and then in \cref{sec:cplxq} we generalize our results to the case where $q$ is allowed to be complex.

\subsection{Real Schr\"odinger problem}
\label{sec:realq}
Here we consider the case where $\sigma \in (0,\infty)^{\cE}$ and $q \in \real^{\cI}$. This gives rise to the following inverse problem.
\begin{problem}
    \label{prob:realq}
    Find $q \in \real^{\cI}$ and $u^{(j)}$ for $j=1,\ldots,N$ in the non-linear system \eqref{eq:nonlin} where $\gamma = q$, 
    \begin{equation}
        \cL(q,u) = \begin{bmatrix}
            (L_\sigma u)_{\cI} + q \odot u_\cI\\
            u_{\cB}
        \end{bmatrix},
        ~
        b^{(j)} = 
        \begin{bmatrix}
            0\\
            f^{(j)}
        \end{bmatrix}
        ~\text{and}~
        \cM(q,u) = q\odot |u_{\cI}|^2.
    \end{equation}
    Here the $f^{(j)}$ are known real Dirichlet boundary conditions.
\end{problem}
The result analogous to \cref{thm:realcond} for the real Schr\"odinger case is as follows.
\begin{theorem}
    \label{thm:realq}
    Define the sign functions $s^{(j)}: \cI \to \{-1,1\}$ by
    \begin{equation}
        \label{eq:sjq}
        s^{(j)} = 1 - 2 \ifun_{\supp u_{\cI}^{(j)}},~j=1,\ldots,N.
    \end{equation}
    Assume that the following conditions hold:
    \begin{enumerate}[i.]
        \item The supports of $u_{\cI}^{(j)}$ cover $\cI$, i.e. $\cup_{j=1}^N \supp u_{\cI}^{(j)} = \cI$.
        \item The matrix $[L_\sigma]_{\cI\cI} + \diag(q)$ is invertible.
        \item The matrices $[L_\sigma]_{\cI\cI} + \diag(s^{(j)}\odot q)$ are invertible for $j=1,\ldots,N$.
    \end{enumerate}
    Then the linearization of \cref{prob:realq} is injective.
\end{theorem}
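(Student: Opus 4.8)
The structure should mirror that of \cref{thm:realcond} almost exactly, with $\cI$ playing the role that $\cE$ played there, and $u_{\cI}^{(j)}$ playing the role of $\nabla u^{(j)}$. First I would write out the linearization of \cref{prob:realq}: linearizing $\cM(q,u) = q\odot|u_\cI|^2$ about the reference gives
\begin{equation}
 |u_\cI^{(j)}|^2 \odot \delta q + 2 q \odot u_\cI^{(j)} \odot \delta u_\cI^{(j)} = \delta H^{(j)},
\end{equation}
while linearizing $\cL$ gives $(\delta u^{(j)})_{\cB} = 0$ and $[L_\sigma \delta u^{(j)}]_\cI + \delta q \odot u_\cI^{(j)} + q\odot \delta u_\cI^{(j)} = 0$. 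Assumption (ii) is exactly what is needed for the reference Dirichlet problem \eqref{eq:qdir} to be well posed. Setting $\delta H^{(j)} = 0$, from the boundary equation $\delta u_\cB^{(j)}=0$, so $\delta u^{(j)} = R_\cI^T \delta u_\cI^{(j)}$.

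The key algebraic step is the analogue of \eqref{eq:deltasig2}: from the measurement equation with $\delta H^{(j)}=0$ we get
\begin{equation}
 \delta q \odot \ifun_{\supp u_\cI^{(j)}} = -2 (u_\cI^{(j)})^\dagger \odot q \odot \delta u_\cI^{(j)}.
\end{equation}
Next I would observe that in the interior equation $\delta q$ appears only multiplied by $u_\cI^{(j)}$, so $\delta q$ may be replaced by $\delta q \odot \ifun_{\supp u_\cI^{(j)}}$; substituting the displayed expression yields
\begin{equation}
 [L_\sigma \delta u^{(j)}]_\cI + \big(1 - 2 u_\cI^{(j)}\odot (u_\cI^{(j)})^\dagger\big) \odot q \odot \delta u_\cI^{(j)} = 0.
\end{equation}
Recognizing $s^{(j)} = 1 - 2 u_\cI^{(j)}\odot (u_\cI^{(j)})^\dagger = 1 - 2\ifun_{\supp u_\cI^{(j)}}$ as in \eqref{eq:sjq}, this collapses to
\begin{equation}
 \big([L_\sigma]_{\cI\cI} + \diag(s^{(j)}\odot q)\big)\,\delta u_\cI^{(j)} = 0.
\end{equation}
By assumption (iii) the matrix on the left is invertible, so $\delta u_\cI^{(j)} = 0$, hence $\delta u^{(j)} = 0$; feeding this back gives that the entries of $\delta q$ supported on $\supp u_\cI^{(j)}$ vanish. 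Running over $j=1,\ldots,N$ and invoking assumption (i), $\cup_j \supp u_\cI^{(j)} = \cI$, forces $\delta q = 0$, which is injectivity.

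The computation is routine once the dictionary $\nabla u^{(j)} \leftrightarrow u_\cI^{(j)}$, $\nabla^T(\cdot)\leftrightarrow$ (nothing, since $q$ multiplies pointwise on $\cI$) is set up; the only point requiring a little care is the fact that $\diag(s^{(j)}\odot q)$ is the correct lower-order term replacing $L_{s^{(j)}\odot\sigma}$, because here the conductivity $\sigma$ is fixed and not perturbed, while $q$ is the unknown sitting diagonally on the interior nodes. I do not anticipate a genuine obstacle; the mild subtlety is simply bookkeeping the restriction/extension operators $R_\cI, R_\cI^T$ so that $[L_\sigma]_{\cI\cI} + \diag(s^{(j)}\odot q)$ appears rather than $L_{\sigma} + \text{(something)}$ acting on all of $\cV$. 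For this reason I would phrase the argument directly in terms of the interior reduction from the start, exactly as in the proof of \cref{thm:realcond}.
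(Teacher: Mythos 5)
Your proposal is correct and follows essentially the same route as the paper's proof: linearize, isolate $\delta q \odot \ifun_{\supp u_\cI^{(j)}}$ from the power equation, substitute into the interior equation to obtain $([L_\sigma]_{\cI\cI} + \diag(s^{(j)}\odot q))\delta u_\cI^{(j)} = 0$, and conclude via assumptions (iii) and (i). If anything, your write-up is slightly more explicit than the paper's at the final step, where assumption (i) is invoked to force $\delta q = 0$.
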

\begin{proof}
The proof is very similar to the proof of \cref{thm:realq}, we take the linearization of the \cref{prob:realq}, equate it to zero and show that the only way this can happen is if $\delta q=0$ and $\delta u_{\cI}^{(j)} = 0$, $j=1,\ldots,N$. Assumption (ii) is needed to ensure that the Dirichlet problem admits a unique solution. Now let us proceed by linearizing \cref{prob:realq} and equating to zero to obtain for $j=1,\ldots,N$ that
\begin{equation}
 \begin{aligned}
    (L_{\sigma} \delta u^{(j)})_{\cI} +\delta q \odot u_{\cI}^{(j)} + q \odot \delta u_{\cI}^{(j)}  &= 0,\\
    (\delta u^{(j)})_{\cB}  &= 0 ~\text{and}~\\
    \delta q |u_{\cI}^{(j)}|^{2}+2q \odot \delta u_{\cI}^{(j)} \odot u_{\cI}^{(j)} &= 0.
 \end{aligned}
  \label{eq:linsysq}
\end{equation}
Using the third equation in \eqref{eq:linsysq}  we see that:
\begin{equation}
    \delta q \odot \ifun_{\supp u_{\cI}^{(j)}} = -2 (u_{\cI}^{(j)})^\dagger \odot  q \odot \delta u_{\cI}^{(j)},~j=1,\ldots,N.
    \label{eq:deltaq}
\end{equation}
Now in the first equation of \eqref{eq:linsysq}, $\delta q$ appears only multiplied by $u_{\cI}^{(j)}$. So we can replace $\delta q$ by $\delta q \odot \ifun_{\supp u_{\cI}^{(j)}}$ in the first equation of \eqref{eq:linsysq}, for that particular $j$. Using that $(\delta u^{(j)})_{\cB}  = 0$ (second equation of \eqref{eq:linsysq}) and \eqref{eq:deltaq} in the first equation of \eqref{eq:linsysq} we get 
\begin{equation}
  ((L_\sigma)_{\cI\cI} + \diag(q)) \delta u_{\cI}^{(j)} - 2 (u_{\cI}^{(j)})^\dagger \odot u_{\cI}^{(j)} \odot q \odot \delta u_{\cI}^{(j)} = 0.
\end{equation}
Recognizing that $(u_{\cI}^{(j)})^\dagger \odot u_{\cI}^{(j)} = \ifun_{\supp u_{\cI}^{(j)}}$, we get
\begin{equation}
    ((L_{\sigma})_{\cI\cI}  + \diag((1 -2 \ifun_{\supp u_{\cI}^{(j)}})\odot q)) \delta u_{\cI}^{(j)} =0.
\end{equation}
Using the definition of $s^{(j)}$ in \eqref{eq:sjq} and assumption (iii) we conclude that $ \delta u_{\cI}^{(j)}=0$, which gives injectivity.
\end{proof}

\subsection{The complex Schr\"odinger problem}
\label{sec:cplxq}
We consider the Schr\"odinger problem with $\sigma \in (0,\infty)^{\cE}$ with Schr\"odinger potential of the form $q(\omega) = q' + \jmath \omega q''$, where $q',q'' \in \real^{\cI}$ and $\omega \in \real$ is a frequency. The Dirichlet problem with boundary condition $f(\omega) \in \complex^{\cB}$ is
\begin{equation}
    \label{eq:qdir}
    \begin{aligned}
    (L_\sigma u(\omega))_{\cI\cI} + q(\omega) \odot [u(\omega)]_{\cI} &=0,\\
    [u(\omega)]_{\cB} &= f(\omega).
    \end{aligned}
\end{equation}
We measure the \emph{dissipated power} $q' \odot |u_{\cI}|^2$. As in the complex conductivity case \cref{sec:cplxcond}, we expect that data measured at two frequencies $\omega_0,\omega_1$ to be necessary for local uniqueness. We assume $\omega_0 = 0$ and denote $u_0 = u(\omega_0)$ (which is real) and $u_1 = u(\omega_1)$ (which is complex). Considering $u_1$ and $\overline{u_1}$ as independent variables we can formulate this inverse problem as follows.
\begin{problem}
    \label{prob:cplxq}
    Find $\gamma = (q',q'')$ and $u^{(j)} = (u_0^{(j)}, u_1^{(j)}, \overline{u_1}^{(j)})$, for $j=1,\ldots,N$ in the non-linear system \eqref{eq:nonlin} where
    \begin{equation}
        \cM(\gamma,u) = \begin{bmatrix}
        q' \odot |[u_0]_{\cI}|^2\\
        q' \odot |[u_1]_{\cI}|^2
    \end{bmatrix},
    ~
        \cL(\gamma,u) = \begin{bmatrix}
            (L_\sigma u_0)_{\cI} + q' \odot [u_0]_{\cI}\\
            (L_\sigma u_1)_{\cI} + (q' + \jmath \omega_1 q'') \odot [u_1]_{\cI}\\
            (L_\sigma \overline{u_1})_{\cI} + (q' - \jmath \omega_1 q'') \odot [\overline{u_1}]_{\cI}\\
            [u_0]_{\cB}\\
            [u_1]_{\cB}\\
            [\overline{u_1}]_{\cB}\\
        \end{bmatrix},
    \end{equation}
    and 
    \begin{equation}
        b^{(j)} = \begin{bmatrix}
            0;
            0;
            0;
            f_0^{(j)};
            f_1^{(j)};
            \overline{f_1}^{(j)}
        \end{bmatrix}, 
    \end{equation}
    where a semicolon denotes vector concatenation.
    Here the $f_0^{(j)}$ (resp. $f_1^{(j)}$) are known real (resp. complex) Dirichlet boundary conditions and $\omega_1$ is known.
\end{problem}
\Cref{prob:cplxq} has $2|\cI| + 3N|\cV|$ unknowns and $2N|\cI| + 3N|\cV|$ equations so it is formally determined for $N=1$ and overdetermined for $N>1$. The following theorem gives a sufficient condition for local uniqueness of \cref{prob:cplxcond}.

\begin{theorem}
    \label{thm:cplxq}
    Assume that:
    \begin{enumerate}[i.] 
        \item The graph $\cG$, $q' \in \real^{\cI}$ and the boundary conditions $f^{(j)}$ are such that \cref{thm:realq} is satisfied. 
        \item The Dirichlet problems for both $q'$ and $q' + \jmath \omega q''$ are well posed. 
        \item The Schr\"odinger potential real part $q'$ does not vanish: $|q'| > 0$.
    \end{enumerate}
    Define for $j=1,\ldots,N$ the $|\cI| \times |\cI|$ complex matrices
    \begin{equation}
      \label{eq:bj}
      B^{(j)} = -\jmath \diag([\overline{u_1}^{(j)}]_{\cI} ) [(L_{\sigma})_{\cI\cI}+\diag(q(\omega_1))]^{-1} \diag([u_1^{(j)}]_{\cI}).
    \end{equation}
    If $\Re([B^{(1)}; \ldots; B^{(N)}])$ is full rank, then the linearization of the two frequency complex inverse Schr\"odinger \cref{prob:cplxq} is injective.
\end{theorem}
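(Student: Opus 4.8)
The plan is to mimic the proof of \cref{thm:cplxcond} essentially line by line, substituting the Schr\"odinger operator $(L_\sigma)_{\cI\cI}+\diag(q)$ for $(L_{\sigma'})_{\cI\cI}$ and replacing the edge quantities $\nabla u_1^{(j)}$ by the interior-node quantities $[u_1^{(j)}]_{\cI}$. First I would linearize the system in \cref{prob:cplxq} about the reference $(\gamma,u^{(j)})$ and set $\delta H^{(j)}=0$. The equations associated with $u_0^{(j)}$ decouple and form exactly the linearized real Schr\"odinger system of \eqref{eq:linsysq}: by assumption (i), which ensures \cref{thm:realq} applies, we conclude $\delta q'=0$ and $\delta u_0^{(j)}=0$ for all $j$.

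Next, using $\delta q'=0$, the linearized state equations for $u_1^{(j)}$ (and, for completeness, $\overline{u_1}^{(j)}$) at the interior nodes read $[(L_\sigma)_{\cI\cI}+\diag(q(\omega_1))]\,\delta[u_1^{(j)}]_{\cI} + \jmath\omega_1\, \delta q'' \odot [u_1^{(j)}]_{\cI}=0$, together with the homogeneous boundary condition $[\delta u_1^{(j)}]_{\cB}=0$. Assumption (ii) lets me invert and solve
\begin{equation}
  \delta [u_1^{(j)}]_{\cI} = -\jmath\omega_1\,[(L_\sigma)_{\cI\cI}+\diag(q(\omega_1))]^{-1}\diag([u_1^{(j)}]_{\cI})\,\delta q'',
\end{equation}
and analogously for $\overline{\delta u_1}^{(j)}$ with $q(\omega_1)$ replaced by $\overline{q(\omega_1)}$. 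Then I substitute these into the linearized power equation for the $\omega_1$ measurement, namely $\delta q'\odot|[u_1^{(j)}]_{\cI}|^2 + q'\odot \overline{[u_1^{(j)}]_{\cI}}\odot\delta[u_1^{(j)}]_{\cI} + q'\odot[u_1^{(j)}]_{\cI}\odot\overline{\delta u_1}^{(j)}_{\cI}=0$. Using $\delta q'=0$ and assumption (iii) (so that the factor $q'$ can be cancelled componentwise, since $q'$ has no zero entries), the two terms combine into $B^{(j)}\delta q'' + \overline{B^{(j)}}\,\delta q'' = 2\Re(B^{(j)})\,\delta q''=0$, where $B^{(j)}$ is exactly the matrix in \eqref{eq:bj}. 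Stacking over $j$ gives $\Re([B^{(1)};\ldots;B^{(N)}])\,\delta q''=0$, and the full-rank hypothesis forces $\delta q''=0$. Combined with $\delta q'=0$ and the fact that the $\delta u^{(j)}$ are then determined to be zero, this yields injectivity.

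The only genuinely delicate point, and the place I would be most careful, is the cancellation of the componentwise factor $q'$ in the power equation: this is precisely why hypothesis (iii), $|q'|>0$, is needed — unlike in \cref{thm:realcond} where the analogous role was played by dividing by nonzero entries of $\nabla u^{(j)}$ via the pseudoinverse, here we must divide by $q'$ itself, so a vanishing entry of $q'$ would destroy the argument. Everything else is bookkeeping: one checks that the Hadamard products and the $\diag(\cdot)$ reshufflings line up so that the coefficient matrix of $\delta q''$ is exactly $B^{(j)}$ as defined, with no stray factors of $\omega_1$ surviving (the two $\jmath\omega_1$ factors from \eqref{eq:deltau1jcond}-type expressions are absorbed into the definition of $B^{(j)}$, or else appear symmetrically and are harmless since $\omega_1\neq 0$ is implicit in the two-frequency setup). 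I do not expect any obstruction beyond faithfully transcribing the complex-conductivity argument into the Schr\"odinger setting.
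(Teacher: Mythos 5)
Your proposal is correct and follows essentially the same route as the paper's proof: apply \cref{thm:realq} to the $\omega_0=0$ block to get $\delta q'=0$ and $\delta u_0^{(j)}=0$, solve the linearized interior equations for $\delta[u_1^{(j)}]_{\cI}$ and $\delta[\overline{u_1}^{(j)}]_{\cI}$ in terms of $\delta q''$, substitute into the linearized $\omega_1$ power equation, and cancel the nonvanishing factor $q'$ (and $\omega_1$) to reach $2\Re(B^{(j)})\delta q''=0$. Your remark about why hypothesis (iii) is needed matches exactly the role $|q'|>0$ plays in the paper's argument.
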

\begin{proof}
    The proof proceeds in a similar fashion as the proof for \cref{thm:cplxcond}. We linearize \cref{prob:cplxq}, set the linearization to zero and show that this can only happen if $\delta q'$, $\delta q''$, $\delta u_0^{(j)}$, $\delta u_1^{(j)}$, $\delta \overline{u_1}^{(j)}$ are all zero. We can use \cref{thm:cplxq} on the equations in the linearization corresponding to $\omega_0 = 0$ to deduce that $\delta q'$ and $\delta u_0^{(j)}$ vanish. The next step is to solve for $\delta u_1^{(j)}$ and $\delta \overline{u_1}^{(j)}$ in terms of $\delta q''$. To see this, the linearization of the equations at the interior nodes involving $u_1^{(j)}$ gives
    \begin{equation}
        (L_\sigma \delta u_1^{(j)})_{\cI} + \delta q' \odot [u_1^{(j)}]_{\cI} + \jmath \omega_1 \delta q'' \odot [u_1^{(j)}]_{\cI} + q(\omega_1) \odot [\delta u_1^{(j)}]_{\cI}= 0.
    \end{equation}
    Using that $\delta q' = 0$ and $[\delta u_1^{(j)}]_{\cB} = 0$ and doing the similar operation on the conjugate equations we get
    \begin{equation}
        \label{eq:deltau1jq}
        \begin{aligned}
        [\delta u_1^{(j)}]_{\cI} &= - \jmath \omega_1 ((L_\sigma)_{\cI\cI} + \diag(q(\omega_1)))^{-1} \diag([u_1^{(j)}]_{\cI}) \delta q'',~\text{and}\\
        [\delta \overline{u_1}^{(j)}]_{\cI} &=  \jmath \omega_1 ((L_\sigma)_{\cI\cI} + \diag(\overline{q(\omega_1)}))^{-1} \diag([\overline{u_1}^{(j)}]_{\cI}) \delta q''.
        \end{aligned}
    \end{equation}
    We now move to the equation linearizing the dissipated power measurements for $\omega_1$. Setting to zero we obtain:
    \begin{equation}
        \delta q' \odot |[u_1^{(j)}]_{\cI}|^2 + q' \odot [\overline{u_1}^{(j)}]_{\cI} \odot [\delta u_1^{(j)}]_{\cI} + q' \odot [u_1^{(j)}]_{\cI} \odot [\delta \overline{u_1}^{(j)}]_{\cI} = 0.
    \end{equation}
    Using that $\delta q' = 0$, $|q'|>0$, the expressions for $\delta u_1^{(j)}$ and $\delta \overline{u_1}^{(j)}$ in \eqref{eq:deltau1jq} we obtain equations for $\delta q''$:
    \begin{equation}
        \begin{aligned}
            -\jmath  \diag([\overline{u_1}^{(j)}]_{\cI}) ((L_\sigma)_{\cI\cI} + \diag(q(\omega_1)))^{-1} \diag([u_1^{(j)}]_{\cI}) \delta q''&\\
           +\jmath \diag([u_1^{(j)}]_{\cI}) ((L_\sigma)_{\cI\cI} + \diag(\overline{q(\omega_1)}))^{-1} \diag([\overline{u_1}^{(j)}]_{\cI}) \delta q'' &= 0.
        \end{aligned}
    \end{equation}
    Recognizing $B^{(j)}$, we see that  $ B^{(j)} \delta q'' + \overline{B}^{(j)} \delta q'' = 2 \Re (B^{(j)}) \delta q'' = 0$ for $j=1,\ldots,N$. By assumption, $\Re([B^{(1)}; \ldots; B^{(N)}])$ is full rank therefore we can conclude that $\delta q'' =0$. This proves the desired injectivity result.
\end{proof}

\section{Numerical reconstruction with Gauss-Newton method}
\label{sec:gn}
So far we have focussed on whether the linearization of a non-linear problem admits a unique solution. The Gauss-Newton method can be used to solve the non-linear problem via successive linearization. Here the goal is to solve the non-linear system of equations $R(x)=0$. Let use the notation $x^{(k)}$ for the iterates, $R_k = R(x^{(k)})$ and $DR_k = \partial_xR(x^{(k)})$. The Gauss-Newton iteration  is as follows (see e.g.  
\cite{Nocedal:2006:NO})
\begin{center}
\begin{minipage}{0.6\textwidth}
\begin{tabbing}
$x^{(0)}=$ given\\
for \= $k=0,1,2,\ldots$\\
\> $x^{(k+1)} = x^{(k)} - (DR_k^T DR_k + \alpha^2 I)^{-1} DR_k^T R_k$\\
end
\end{tabbing}
\end{minipage}
\end{center}
Here $\alpha$ is a regularization parameter. As an illustration, we implemented the Gauss-Newton iteration to find a real conductivity from dissipated power measurements (\cref{prob:dhic}) on a grid graph as appears in \cref{fig:gn}. Note that in practice the Gauss-Newton iteration needs a stopping criterion and a globalization strategy. For the stopping criterion we chose to check whether the gradient of $\frac{1}{2} \|R(x)\|_2^2$ is sufficiently small. For the globalization strategy we used the Armijo backtracking line search (see e.g. \cite{Nocedal:2006:NO}). More details are provided in \verb|gauss_newton.ipynb| in the supplementary materials \cite{thecode}.

The graph we considered was a $10 \times 10$ grid, with boundary nodes on the top, bottom, left and right edges. To define the boundary conditions, each node was given positions $x : \cV \to [0,1]$ and $y : \cV \to [0,1]$ so that they correspond to a uniform discretization of the unit square. With this in mind we defined the $N=2$ boundary conditions by $f^{(1)} = x_{\cB}+y_{\cB}$ and $f^{(2)} = x_{\cB}-y_{\cB}$. If the conductivity were constant, these boundary conditions are expected to give voltages that are (up to a constant) a finite difference discretization of the harmonic functions $x+y$ and $x-y$ on the unit square. This choice of boundary conditions avoids having edges with no current flowing. The conductivities reconstructed using the Gauss-Newton method are given in \cref{fig:gn}(c) without noise and in \cref{fig:gn}(d) with 5\% additive noise relative to the maximum value of the dissipated power in the network. In the noiseless case, the reconstructed conductivity has relative error of about $3.3\times 10^{-8} \%$ with respect to the true conductivity, while in the noisy case the relative error was of about $5.1\%$. For reference, we include in \cref{fig:gn}(a) and \cref{fig:gn}(b), the dissipated power used for the noiseless reconstructions.

\begin{figure}
\begin{center}
\begin{tabular}{c@{}c@{}c}
 (a) & (b) & \\
\includegraphics[height=4cm]{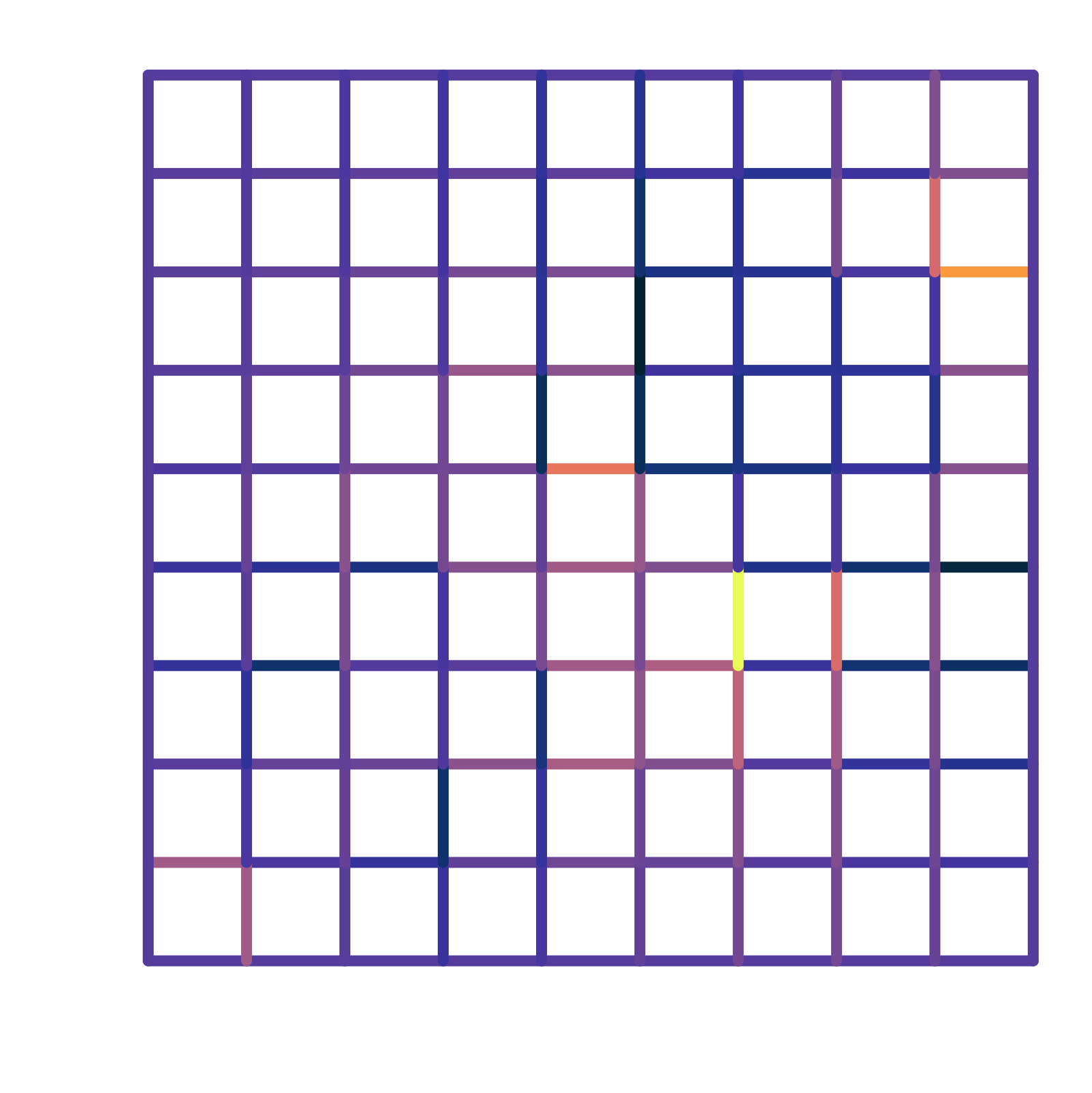} &
\includegraphics[height=4cm]{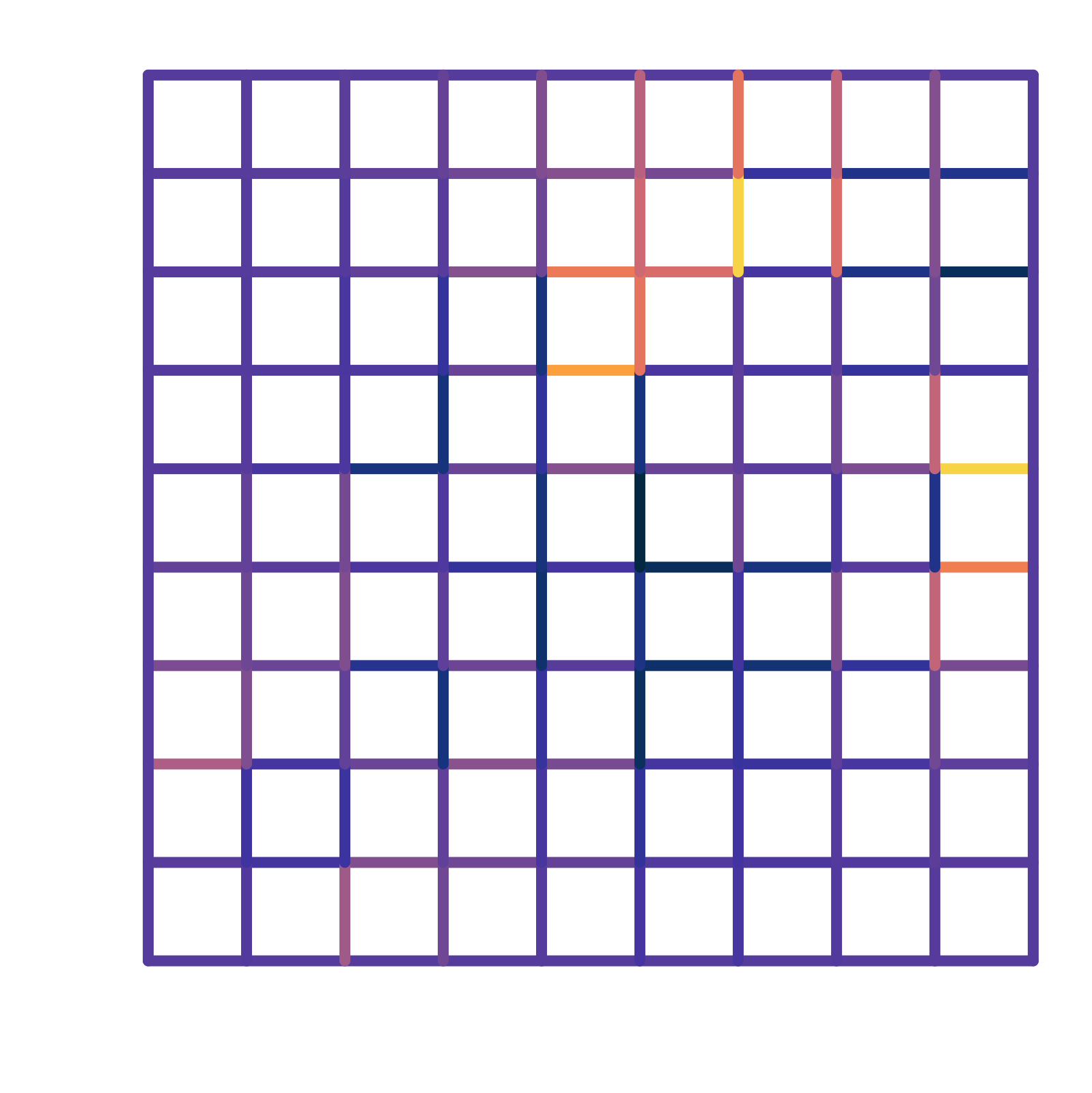} &
\includegraphics[height=4cm]{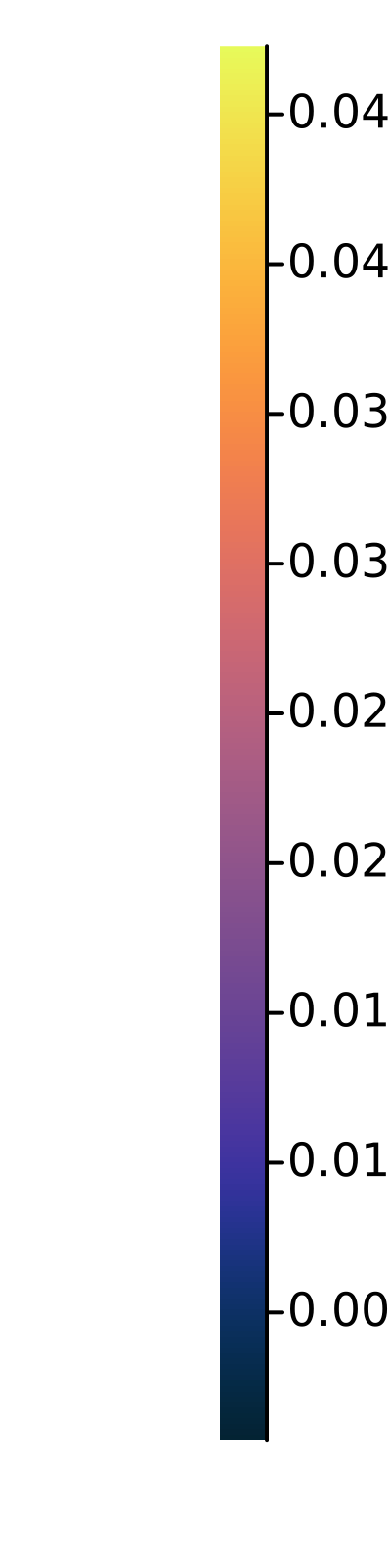}\\
\includegraphics[height=4cm]{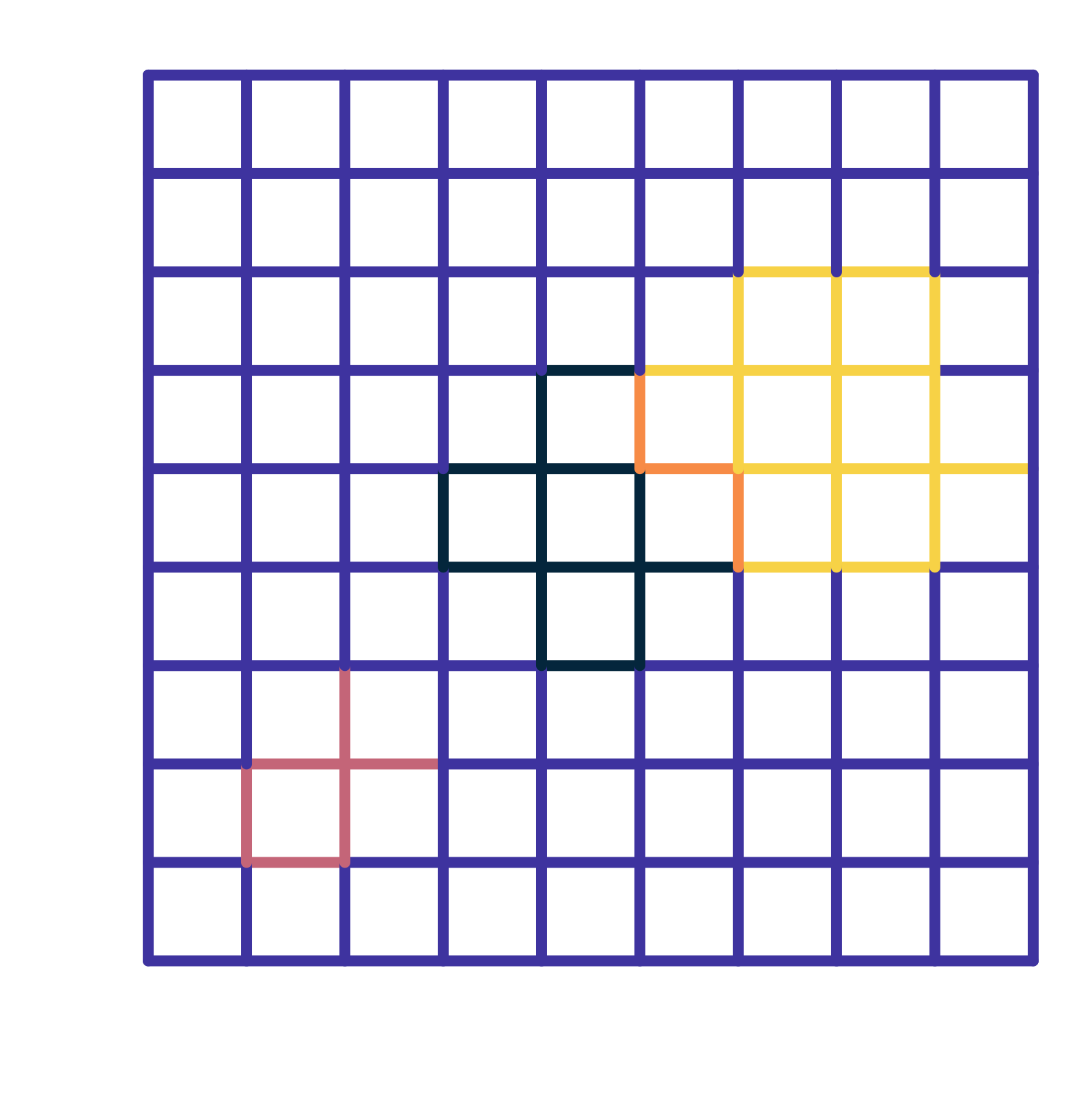} &
\includegraphics[height=4cm]{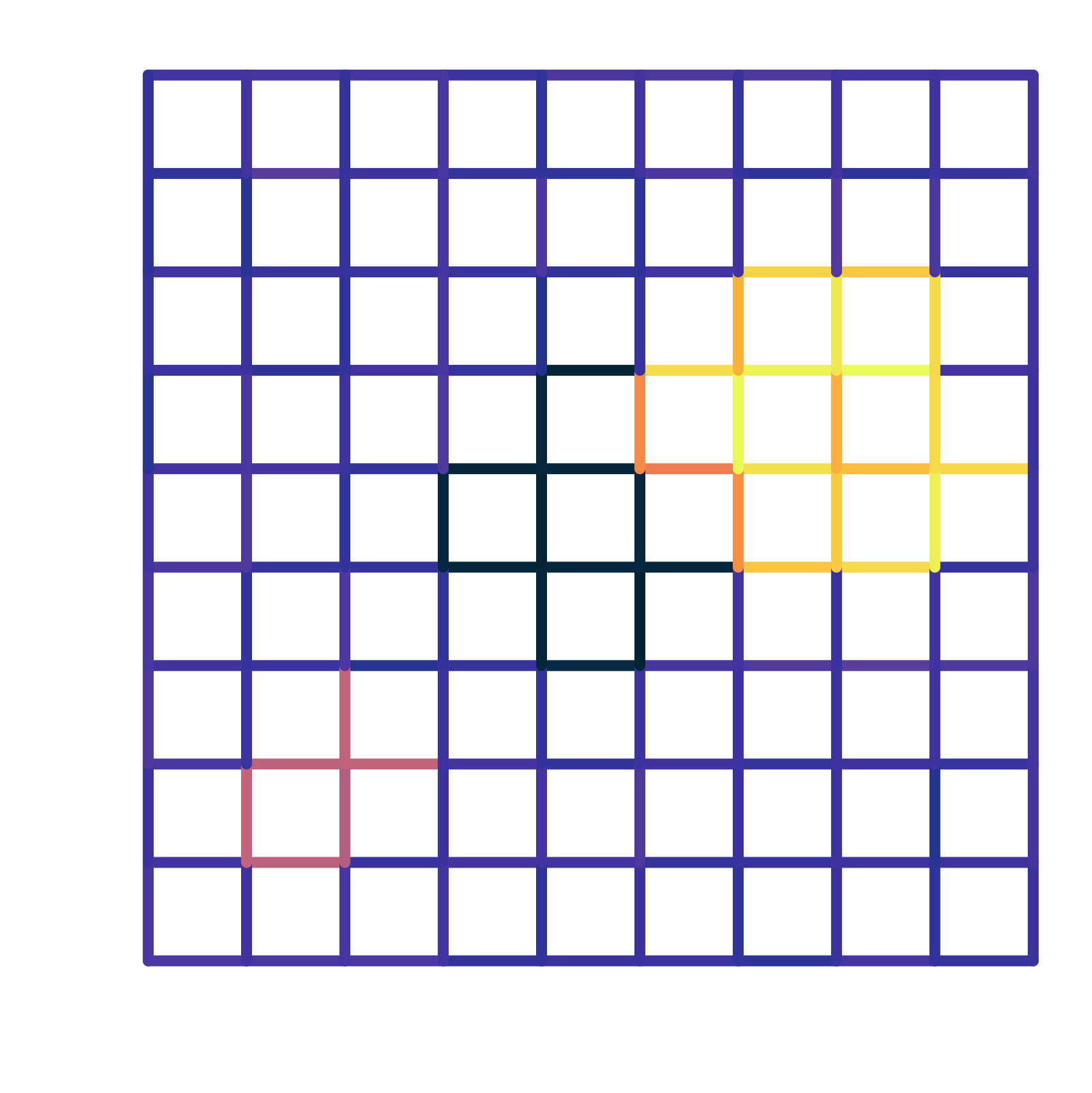} &
\includegraphics[height=4cm]{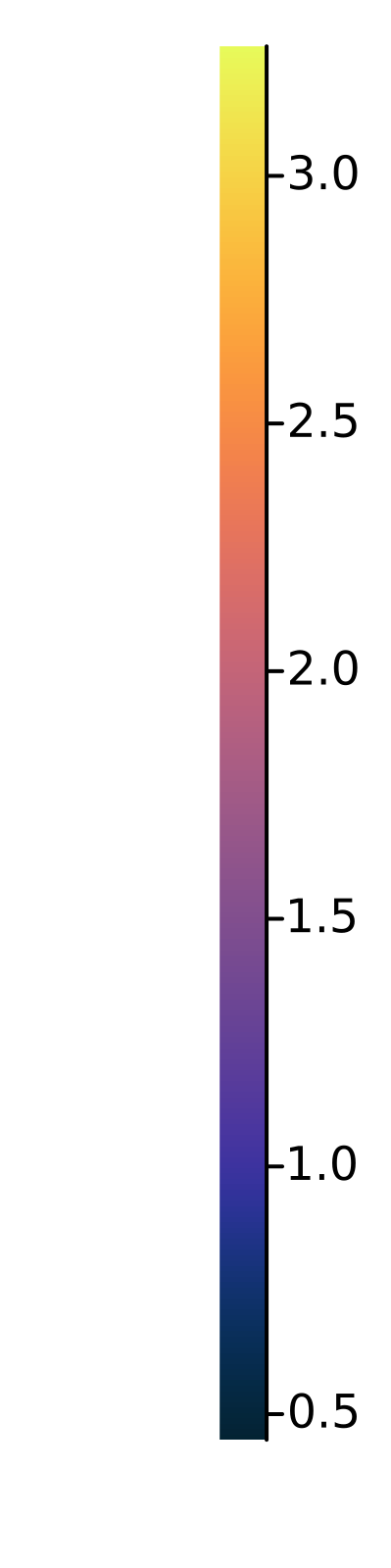}\\
(c) & (d) &
\end{tabular}
\end{center}
\caption{The Gauss-Newton method was used to find a real conductivity from dissipated power data (a) and (b). The reconstructions without noise are given in (c) and those with 5\% additive noise are given in (d).}
\label{fig:gn}
\end{figure}
\section{Summary and perspectives}
\label{sec:summary}
We have introduced a systematic approach to studying discrete inverse problems with internal functional data, that is inspired by the continuum approach \cite{Kuchment:2012:SIP,Bal:2014:HIP}. This approach has been applied to the inverse conductivity and Schr\"odinger problems from measurements of dissipated power at each of the network elements.  One possible extension of our results would be to measurements of \emph{complex power}, i.e. in the conductivity case $(\sigma' + \jmath \omega \sigma'') \odot |\nabla u(\omega)|^2$. The Schr\"odinger inverse problem may be extended as well to complex conductivity, but we assumed for simplicity $\sigma$ real. For the inverse conductivity problem, it would be interesting to apply the same technique for measurements of $|\sigma \odot \nabla u^{(j)}|$ \cite{Knox:2019:ENP} or $\sigma\odot \nabla u^{(j)}$ \cite{Ko:2017:RTI}. When the unknown (conductivity or Schr\"odinger potential) was allowed to be complex and dissipated power was measured, we relied on two frequency measurements $\omega_0$ and $\omega_1$ where $\omega_0=0$ to help us is in recovering the imaginary part of the unknown. It would be interesting to study the case where we have measurements at frequencies $\omega_1,\omega_2,\ldots,\omega_M$ that do not necessarily include the zero frequency. Also in the complex case we used the very simple case where the imaginary part of the unknown (conductivity or Schr\"odinger potential) is linear in the frequency $\omega$. This is a simplification: passive electrical elements can have a rational function response, as can be seen from network synthesis \cite{Bott:1949:ISW}. It would be interesting to study to what extent these more complicated network elements can be recovered from internal measurements of power.

\subsection*{Author Contributions} FGV proposed the problem, oversaw the research, contributed to the writing and numerical experiments. GY proved the uniqueness results for the real and complex conductivity problems, contributed to the writing, examples and numerical experiments. MC contributed with the linearization for the complex case. AR contributed with the proofs for the Schr\"odinger problem with real Schr\"odinger potential.

\subsection*{Acknowledgements} The authors acknowledge support by National Science Foundation Grants DMS-2008610, DMS-2136198 and from the University of Utah Mathematics Department. The research was first proposed by FGV to an introduction to research class in the Fall 2021, and followup REUs with GY, MC and AR. FGV would like to acknowledge the contributions of the other students that participated in the class: Brian Bettinson, Cormac LaPrete, and Calvin Zylstra.

\bibliographystyle{siamplain}
\bibliography{hip}
\end{document}